\documentclass[11pt]{article}

\usepackage{amsmath}
\usepackage{amssymb}
\usepackage{amsthm}
\usepackage{multirow}
\usepackage{color}
\usepackage{longtable}
\usepackage{array}
\usepackage{url}
\usepackage{enumerate}
\usepackage{subcaption}

\oddsidemargin  0pt \evensidemargin 0pt \marginparwidth 40pt
\marginparsep 10pt \topmargin -10pt \headsep 10pt \textheight
8.7in \textwidth 6.7in \textheight 7.8375in

\newtheorem{theorem}{Theorem}[section]

\newtheorem{lemma}[theorem]{Lemma}
\newtheorem{example}[theorem]{Example}

\newtheorem{remark}{Remark}[section]



\begin{document}

\title{Experimental Constructions of Binary Matrices with Good Peak-Sidelobe Distances}

\author{
Jerod Michel 
\thanks{J. Michel is with the Department of Mathematics, Zhejiang University, Hangzhou 310027, China (e-mail: contextolibre@gmail.com).}
}
\maketitle
\begin{abstract}
Skirlo et al., in ``Binary matrices of optimal autocorrelations as alignment marks'' [{\it Journal of Vacuum Science and Technology} Series B 33(2) (2015) 1-7], defined a new class of binary matrices by maximizing the peak-sidelobe distances in the aperiodic autocorrelations and, by exhaustive computer searches, found the optimal square matrices of dimension up to $7\times7$, and optimal diagonally symmetric matrices of dimensions $8\times8$ and $9\times9$. We make an initial investigation into and propose a strategy for (deterministically) constructing binary matrices with good peak-sidelobe distances. We construct several classes of these and compare their distances to those of the optimal matrices found by Skirlo et al. Our constructions produce matrices that are near optimal for small dimension. Furthermore, we formulate a tight upper bound on the peak-sidelobe distance of a certain class of circulant matrices. Interestingly, binary matrices corresponding to certain difference sets and almost difference sets have peak-sidelobe distances meeting this upper bound.
\\
\\
\\
\noindent {{\it Key words and phrases\/}:
Difference sets, almost difference sets, binary matrices, aperiodic autocorrelation, cyclotomic cosets.
}\\
\smallskip

\noindent {{\it Mathematics subject classifications\/}: 05B05, 05B10, 11T22, 51E30, 05B30, 94C30.}
\end{abstract}

\section{Introduction}\label{sec1}
Sequences and matrices with good autocorrelation properties have important applications in digital communications such as radar, sonar, code-division multiple access (CDMA), and cryptography \cite{BARK}, \cite{NH}, as well as in coded aperture imaging \cite{GF}.
A less developed problem in matrix design was recently considered by Skirlo et al. in \cite{SKIR}. Here, the application of binary matrices with good aperiodic autocorrelation properties to two-dimensional spatial alignment is noted, where an alignment mark is made by creating a surface pattern different from the background thereby allowing pattern information to transform into a two-level signal while a digital image is taken. Position marks for electron-beam lithography based on such binary matrices, for example, were shown to be immune to noise and certain manufacturing errors in \cite{BK}, but the elements of the application have yet to be optimized. So far, such matrices have been obtained only by exhaustive computer searches.
\par
Let $I$ and $J$ be index sets with $\left| I \right|=M$ and $\left| J \right|=N$. Let $R=(R_{i,j})$ be an $M$ by $N$ binary matrix whose rows and columns are labelled by $I$ and $J$ respectively. The 2D aperiodic autocorrelation $A_{R}(\tau_{1},\tau_{2})$ at integer shifts $\tau_{1}$ and $\tau_{2}$ of the binary matrix $R$ is given by \begin{equation}\label{eq1}
A_{R}(\tau_{1},\tau_{2})=\sum_{i \in I} \sum_{j \in J} R_{i,j} R_{i+\tau_{1},j+\tau_{2}}. \end{equation}
\par
Note that $(A_{R}(\tau_{1},\tau_{2}))$ is an inversion-symmetric $(2M-1)\times(2N-1)$ matrix, i.e., $A_{R}(\tau_{1},\tau_{2})=A_{R}(-\tau_{1},-\tau_{2})$ for all $\tau_{1},\tau_{2}$. Also note that all the matrices are implicitly
padded with 0s for all the matrix elements  of indices exceeding
their matrix dimensions. The crosscorrelation between the matrix $R$ and the data image matrix $R'$ is given by \[ C_{RR'}(\tau_{1},\tau_{2})=\sum_{i=1}^{M}\sum_{j=1}^{N} R_{i,j}R'_{i+\tau_{1},j+\tau_{2}}. \] If the data matrix $R'$ is a noisy version of the reference matrix $R$, the peak value of the crosscorrelation can determine the most probable position of the mark.
\par
The aperiodic autocorrelation of a binary matrix can be expressed as $\{d_{1} | n_{1},n_{2},...,n_{s+1}\}$. Here $d_{1} = l-s$ where $l$ is the number of $1$s in the binary matrix, called the {\it peak}, and $s$ is the highest value of $A_{R}(\tau_{1},\tau_{2})$ for $\tau_{1},\tau_{2}$ not both zero, called the {\it nearest sidelobe}. The other distances are given by $d_{i+1}=d_{i}+1$ for $i\geq 1$, where $n_{i}$ gives the number of times $d_{i}$ occurs in the autocorrelation.
\subsection{Previous Work}
In \cite{SKIR}, two criteria are given for a binary matrix to be optimal; one is that the probability of misalignment, which depends on the values of the sidelobes of the autocorrelation, should be minimized, and the other is that the misalignment deviation, which depends on the positions of the sidelobes relative to the peak value $A_{R}(0,0)$, should be minimized. Therefore, binary matrices with an autocorrelation $\{d_{1} | n_{1},n_{2},...,n_{s+1}\}$ in which \begin{enumerate} \item $d_{1}$ is maximized, and
\item the $n_{i}$s are minimized sequentially in the dictionary order,
\end{enumerate} are desirable. Matrices of any sizes can be compared using these criteria.
\par
In \cite{SKIR}, two upper bounds which depend on the number of 1s in the matrix were computed. Skirlo et al. found that the largest possible $d_{1}$ for all matrices with given $l$ (number of $1$s in the matrix) and fixed dimension, is $d_{1,max}=l-s_{min}(l)$ where $s_{min}(l)$ is the minimum highest sidelobe value as a function of $l$. The first upper bound, $d_{1,max}^{upper,I}(l)$ was computed by maximizing $l-A_{R}(\pm 1,0)$. By computing $A_{R}(\pm 1,0)$ a lower bound $s_{min}^{lower,I}(l)$ on $s_{min}(l)$ is obtained. Assuming $M \leq N$ the first upper bound on $d_{1,max}(l)$ was found to be \begin{equation}\label{eq2}
 d_{1,max}^{upper,I}(l)= \begin{cases}
l,       & l \in \lbrack 0,N_{1} \rbrack     \\
N_{1},   & l \in \lbrack N_{1},N_{2} \rbrack \\
M(N+1)-l,& l \in \lbrack N_{2},MN \rbrack    \\
\end{cases}
\end{equation} where $N_{1}=\frac{MN}{2}$, $N_{2}=\frac{MN}{2}+M$ when $MN$ is even and $N_{1}=\frac{MN+1}{2}$, $N_{2}=\frac{MN+1}{2}+M-1$ if $MN$ is odd. The other upper bound that was  found in \cite{SKIR} has similar dependencies. The drawbacks of these upper bounds are that they are not tight, which can easily be seen by checking any of the peak-sidelobe distances of the optimal matrices they found against the bound, and that they are functions of $l$ and not just $M$ and $N$. The aperiodicity of the autocorrelation function makes the problem of formulating a tight upper bound that depends only on the dimension of the binary matrix difficult.
\subsection{Overview of Proposed Strategy}\label{ssec1.1}
  Here we propose a strategy for an initial investigation into the problem of constructing binary matrices with good peak-sidelobe distances. Concerning the $M\times M$ optimal matrices found in \cite{SKIR}, three important observations can be made: the first is that most of them are diagonally-symmetric, the second is that the nearest sidelobe always occurs within $\{A_{R}(\pm 1,0),A_{R}(0,\pm 1)\}$, and the third is that they all consist of an {\it interior} ($(M-2)\times(M-2)$ matrix obtained by deleting the first and last row and the first and last column) in which each row and column has roughly the same number of 1s, and an {\it exterior} (first and last row, and first and last column) in which every entry is a $1$ except for possibly one $0$ on each side. These observations are illustrated in Figure \ref{fig1} by the optimal binary matrices of dimensions $6\times 6$ resp. $7\times 7$ that were found in \cite{SKIR}.
\begin{figure}\begin{center}\begin{subfigure}[b]{0.33\textwidth} $ \left[\begin{array}{cccccc}
1&1&0&1&1&1\\
1&0&1&0&0&1\\
0&1&1&0&1&0\\
1&0&0&0&1&1\\
1&0&1&1&0&1\\
1&1&0&1&1&1\end{array}\right]$
\end{subfigure}
\begin{subfigure}[b]{0.33\textwidth}
$\left[\begin{array}{ccccccc}
1&1&1&1&0&1&1\\
1&0&0&1&1&0&1\\
0&1&1&0&1&0&1\\
1&1&0&1&0&1&1\\
1&0&1&0&0&1&0\\
1&0&0&1&1&0&1\\
1&1&1&0&1&1&1\end{array}\right]$
\end{subfigure}\caption{\footnotesize{$6\times6$ and $7\times7$ optimal binary matrices}}\label{fig1}\end{center}
\end{figure}
\par
This first and third observations lead us to consider the following general method for constructing a binary $M\times M$ matrix with a good peak-sidelobe distance:
\\
\\
{\bf Step 1}: Construct an $(M-2)\times(M-2)$ circulant matrix whose peak-sidelobe distance is large among the class of all $(M-2)\times(M-2)$ circulant matrices.
\\
\\
{\bf Step 2}: Obtain an $M\times M$ matrix by giving a border of $1$s to the matrix obtained in Step 1.
\\
\\
{\bf Step 3}: Try to increase the peak-sidelobe distance of the matrix obtained in Step 2 by changing some of the $1$s on the border to $0$s.
\\
\\
Since computing the peak-sidelobe distance at an arbitrary shift is troublesome due to the aperiodicity of the autocorrelation, the second observation mentioned above suggests considering, as the interior matrix, circulant matrices whose nearest sidelobe occurs within $\{A_{R}(\pm 1,0),A_{R}(0,\pm 1)\}$. This ensures that the peak-sidelobe distance will be $A_{R}(0,0)-A_{R}(\tau_{1}',\tau_{2}')$ where $A_{R}(\tau_{1}',\tau_{2}')\in \{A_{R}(\pm 1,0),A_{R}(0,\pm 1)\}$. It turns out that if we assume the interior matrix is a circulant matrix such that in each row (or column), the number of pairs of consecutive $1$s is large enough, the nearest sidelobe will occur in $\{A_{R}(\pm 1,0),A_{R}(0,\pm 1)\}$. We formulate a tight upper bound for the peak-sidelobe distance of such a circulant matrix and, interestingly, circulant matrices corresponding to certain difference sets and almost difference sets have peak-sidelobe distances meeting this upper bound. It turns out that we can find $(M-2)\times(M-2)$ circulant matrices whose peak-sidelobe distances are optimal among the class of all such $(M-2)\times(M-2)$ circulant matrices. Using such matrices as the interior matrix, after adjoining an exterior of $1$s, and then carefully choosing which of these $1$s to change to $0$s, we obtain $M\times M$ binary matrices with peak-sidelobe distances that are good in the sense that they are very close to being optimal for small dimensions. We construct several families of such matrices and, for small dimensions, compare their peak-sidelobe distances to those of the optimal matrices found in \cite{SKIR}.

\par
In Section \ref{sec2} we discuss some necessary preliminary concepts. In Section \ref{sec3} we formulate a tight upper bound for the peak-sidelobe distance of a certain class of binary circulant matrices. In Section \ref{sec4} we construct several binary circulant matrices with peak-sidelobe distances meeting the upper bound. In Section \ref{sec5} we construct several families of binary matrices with good peak-sidelobe distances and, for small dimension, compare these to the peak-sidelobe distances of the optimal matrices found in \cite{SKIR}. Section \ref{sec6} concludes the paper.

\section{Preliminaries}\label{sec2}
\subsection{Periodic Distance, Difference Sets and Circulant Matrices}
Let $G$ be an additive group of order $v$, and $k$ a positive integer such that $2\leq k < v$. A $k$-element subset $D \subseteq G$ has {\it difference levels} $\mu_{1} < \cdots < \mu_{s}=:\Lambda$ if there exist integers $t_{1},...,t_{s}$ such that the multiset \[ M=\{g-h \mid g,h \in D\} \] contains exactly $t_{i}$ members of $G-\{0\}$ each with multiplicity $\mu_{i}$ for all $i$, $1 \leq i \leq s$. The {\it periodic distance} of $D$, denoted $d(D)$, is defined by $d(D)=k-\Lambda$. We say that $D$ is {\it cyclic} if $G$ is cyclic. If $D$ is cyclic and the number of pairs of consecutive residues in $D$ is $\Lambda$, then we say $D$ is {\it special}. In the case where $s=1$, $D$ is called a $(v,k,\Lambda)$ {\it difference set} \cite{HALL}, and in the case where $s=2$ and $\Lambda=\mu_{2}=\mu_{1}+1$, $D$ is called a $(v,k,\mu_{1},t_{1})$ {\it almost difference set} \cite{NOW}.
\begin{theorem}\label{th23} {\rm \cite{STIN}} The set $D$ is a $(v,k,\lambda)$ difference set in Abelian group $G$ if and only if its complement $D^{c}$ is a $(v,v-k,v-2k+\lambda)$ difference set in $G$.
\end{theorem}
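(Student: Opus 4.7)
The plan is to prove the equivalence by a direct counting argument on difference representations, exploiting the fact that complementation is an involution (so only one direction really needs to be argued).

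First I would record the easy part: if $|D| = k$, then $|D^c| = v - k$, so the first parameter is correct on inspection. The work is in verifying the multiplicity parameter $v - 2k + \lambda$.

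Next, fix any non-identity element $g \in G$ and partition the $v$ ordered pairs $(x,y) \in G \times G$ with $x - y = g$ according to which of $D$ or $D^c$ contains $x$ and $y$. Let
\[
\lambda_D(g) = \#\{(x,y)\in D\times D : x - y = g\}, \qquad \lambda_{D^c}(g) = \#\{(x,y)\in D^c\times D^c : x - y = g\},
\]
and similarly $\mu_1(g)$, $\mu_2(g)$ for the two mixed cases. These four quantities sum to $v$, since for each $x \in G$ there is a unique $y = x - g$. Now I would compute each piece. By hypothesis $\lambda_D(g) = \lambda$. For the mixed case with $x \in D$: there are $k$ choices of $x$, and each forces a unique $y = x - g$; of those $y$'s, exactly $\lambda$ lie in $D$, so $k - \lambda$ lie in $D^c$. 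Thus $\mu_1(g) = k - \lambda$, and symmetrically $\mu_2(g) = k - \lambda$. Substituting into $\lambda + (k - \lambda) + (k - \lambda) + \lambda_{D^c}(g) = v$ yields $\lambda_{D^c}(g) = v - 2k + \lambda$, independent of $g$. This shows $D^c$ is a $(v,v-k,v-2k+\lambda)$ difference set.

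For the converse, I would simply apply what has just been proved to the set $D^c$ in place of $D$: since $(D^c)^c = D$, the computation gives that $D$ has parameters $(v, v-(v-k), v - 2(v-k) + (v-2k+\lambda)) = (v,k,\lambda)$, which is the hypothesis. No step here is really an obstacle; the only thing to be careful about is that the counting of mixed pairs is performed with $x$ (or $y$) ranging over all of $D$, not just over pairs with a pre-specified difference, so that the ``$k$ choices'' count is correct. Since the argument never uses the group being cyclic or even Abelian in any essential way (only that each equation $x - y = g$ has a unique solution in $y$ for each $x$), the proof could be recorded in a few lines once this bookkeeping is in place.
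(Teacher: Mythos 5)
Your proof is correct. Note that the paper offers no proof of this statement at all --- it is quoted verbatim from the cited reference \cite{STIN} as a known preliminary fact --- so there is nothing internal to compare against. Your counting argument (partitioning the $v$ solutions of $x-y=g$ by membership of $x$ and $y$ in $D$ or $D^{c}$, computing the two mixed counts as $k-\lambda$, and solving for $\lambda_{D^{c}}(g)=v-2k+\lambda$) is the standard textbook derivation, and your use of the involution $(D^{c})^{c}=D$ to dispatch the converse is clean and complete. Your closing observation is also accurate: nothing in the argument uses commutativity, only that $y=x-g$ is uniquely determined by $x$ and $g$, so the statement holds for arbitrary finite groups even though the paper states it only for Abelian ones.
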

\begin{theorem}\label{th24} {\rm \cite{ACH}} The set $D$ is a $(v,k,\lambda,t)$ almost difference set in Abelian group $G$ if and only if its complement $D^{c}$ is $(v,v-k,v-2k+\lambda,t)$ almost difference set in $G$.
\end{theorem}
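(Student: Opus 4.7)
The plan is to reduce the equivalence to one elementary identity relating the ``difference function'' of $D$ to that of $D^{c}$, and then transport the two-level structure of an almost difference set across that identity.

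For a $k$-subset $D\subseteq G$ with $|G|=v$ and each $x\in G\setminus\{0\}$, set $d_{D}(x)=|\{(g,h)\in D\times D:g-h=x\}|$, and analogously define $d_{D^{c}}(x)$ for $D^{c}$. The first step is to establish
\[
d_{D^{c}}(x)\;=\;d_{D}(x)+v-2k\qquad\text{for all }x\in G\setminus\{0\}.
\]
I would prove this by a double count. For fixed $x\neq 0$, the $v$ ordered pairs $(g,h)\in G\times G$ with $g-h=x$ split into four cells according to whether $g,h$ lie in $D$ or in $D^{c}$. Since $g\mapsto g-x$ is a bijection on $G$, summing the cells first over all first coordinates and then over all second coordinates yields two linear relations; combined with $|D|=k$ and $|D^{c}|=v-k$, they force the displayed formula after a single subtraction.

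The second step is then almost automatic. If $D$ is a $(v,k,\lambda,t)$ almost difference set, then $d_{D}$ takes values only in $\{\lambda,\lambda+1\}$ on $G\setminus\{0\}$, with exactly $t$ elements at the lower level $\lambda$. Adding the constant $v-2k$ to every value, the identity tells us that $d_{D^{c}}$ likewise takes only the two consecutive values $v-2k+\lambda$ and $v-2k+\lambda+1$, with exactly $t$ occurrences of the lower level. Since $|D^{c}|=v-k$, this is precisely a $(v,v-k,v-2k+\lambda,t)$ almost difference set. The converse direction is immediate because $(D^{c})^{c}=D$, so applying the already-proved implication to $D^{c}$ recovers the parameters of $D$.

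The main step is really the identity; the rest is shifting a multiset of multiplicities by a constant, so I do not expect a substantive obstacle. The one point I would verify before closing the proof is that $v-2k+\lambda\geq 0$ so that the output parameters genuinely describe an almost difference set rather than a degenerate object; this is automatic from the double-count identity $\lambda t+(\lambda+1)(v-1-t)=k(k-1)$ applied to $D^{c}$, whose own difference count $(v-k)(v-k-1)$ forces nonnegativity of the lower level. In particular, the $s=1$ specialization of the same argument recovers Theorem \ref{th23}, which is a useful sanity check.
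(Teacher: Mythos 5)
Your proposal is correct. Note, however, that the paper does not prove this statement at all: it is imported verbatim from the cited reference (together with its difference-set analogue, Theorem~\ref{th23}), so there is no in-paper argument to compare yours against. What you give is the standard complementation argument: the pointwise identity $d_{D^{c}}(x)=d_{D}(x)+v-2k$ for $x\neq 0$, obtained by partitioning the $v$ pairs $(g,h)$ with $g-h=x$ according to membership of $g$ and $h$ in $D$ or $D^{c}$, immediately transports the two-level structure and the count $t$ of lower-level elements, and the converse follows from $(D^{c})^{c}=D$. One small remark: your justification of $v-2k+\lambda\geq 0$ via the aggregate count $(v-k)(v-k-1)$ is more roundabout than necessary and, as stated, does not actually force the minimum level to be nonnegative; the clean reason is simply that $v-2k+\lambda$ is attained as a value of $d_{D^{c}}$, which is a cardinality and hence nonnegative. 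This does not affect the validity of the main argument.
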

\par
We call the set $\{D+g \mid g \in G\}$ of translates of $D$, denoted by $Dev(D)$, the {\it development} of $D$. Let $R=(R_{g,h})$ be the $v\times v$ matrix defined by \[ R_{g,h}=\begin{cases} 1, \text{ if } g\in D+h,\\
                                         0, \text{ otherwise,}\end{cases}\]
for $g,h\in G$. Then we say $R$ is the {\it incidence matrix} of $Dev(D)$. If $G$ is cyclic then we say $R$ is a {\it binary circulant matrix} with defining set $D$, and we say that a $R$ is {\it special} if $D$ is special.

\subsection{Group Ring Notation}
It is sometimes convenient to represent binary matrices by members of group rings. Let $G$ be an additive Abelian group and $\mathbb{Z}$ the ring of integers. Define the group ring $\mathbb{Z} \left[ G \right]$ to be the ring of all formal sums \[
\mathbb{Z} \left[ G \right]=\left\{\sum_{g \in G}a_{g}X^{g} \mid a_{g} \in \mathbb{Z} \right\} \]where $X$ is an indeterminate. The ring $\mathbb{Z}\left[ G \right]$ has the operation of addition given by \[
\sum_{g \in G}a_{g}X^{g}+\sum_{g \in G}b_{g}X^{g}=\sum_{g \in G}(a_{g}+b_{g})X^{g},\]
and the operation of multiplication defined by\[
\left(\sum_{g \in G}a_{g}X^{g}\right)\left(\sum_{g \in G}b_{g}X^{g}\right)=\sum_{h\in G}\left(\sum_{g\in G}a_{g}b_{h-g}\right)X^{h}.\]
We denote the unit of $\mathbb{Z} \left[ G \right]$ by $X^{\mathbf{0}}=\mathbf{1}$ where $\mathbf{0}$ is an additive identity.
\subsection{Cyclotomic Classes and Cyclotomic Numbers}
Let $q$ be a prime power, and $\gamma$ a primitive element of $\mathbb{F}_{q}$. The {\it cyclotomic classes} of order $e$ are given by $C_{i}^{e}=\gamma^{i}\langle \gamma^{e} \rangle$ for $i=0,1,...,e-1$. Define $(i,j)_{e}=|C_{i}^{e}\cap (C_{j}^{e}+1)|$. It is easy to see there are at most $e^{2}$ different cyclotomic numbers of order $e$. When it is clear from the context, we simply denote $(i,j)_{e}$ by $(i,j)$.
The cyclotomic numbers $(h,k)$ of order $e$ have the following properties \cite{D}:

\begin{eqnarray}\label{eq7}
(h,k) & = & (e-h,k-h), \\
(h,k) & = & \begin{cases}
(k,h),                         & \text{if } f \text{ even},\\
(k+\frac{e}{2},h+\frac{e}{2}), & \text{if } f \text{ odd}.
\end{cases}
\end{eqnarray}

We will also need the following lemmas.
\begin{lemma}\label{le5} {\rm \cite{NOW}} \[ -D_{i}^{e} := \{-x \mid x \in D_{i}^{e} \} = \begin{cases}
D_{i}^{e}, & \text{if } f \text{ is even} \\
D_{i+\frac{e}{2}}^{e}, & \text{if } f \text{ is odd}.
\end{cases} \]
\end{lemma}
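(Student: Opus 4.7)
My plan is to reduce the statement to determining which cyclotomic class contains $-1$, and then use the fact that left multiplication by an element of $C_j^e$ permutes the classes by shifting the index by $j$. Writing $q = ef+1$, the element $-1 \in \mathbb{F}_q^*$ is the unique element of order $2$, so $-1 = \gamma^{(q-1)/2} = \gamma^{ef/2}$. The analysis then splits cleanly on the parity of $f$.

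First I would handle the case $f$ even. Then $(q-1)/2 = e(f/2)$ is a multiple of $e$, so $-1 \in \langle \gamma^e \rangle = C_0^e$. Since $C_0^e$ is a subgroup and $C_i^e = \gamma^i C_0^e$, multiplying by $-1$ fixes each coset setwise, giving $-C_i^e = C_i^e$. Next, for $f$ odd, I note that because $q$ is odd (the only case in which $-1 \neq 1$), $ef = q-1$ is even, forcing $e$ to be even; hence $e/2$ is an integer. Writing $(q-1)/2 = (e/2)f = e/2 + e((f-1)/2)$, we obtain $-1 = \gamma^{e/2}(\gamma^e)^{(f-1)/2} \in \gamma^{e/2}\langle \gamma^e \rangle = C_{e/2}^e$. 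Therefore $-C_i^e = (-1)\gamma^i \langle \gamma^e \rangle = \gamma^{i+e/2}\langle \gamma^e \rangle = C_{i+e/2}^e$, where the index is taken modulo $e$.

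There is really no obstacle here: the whole argument is a one-line computation in the cyclic group $\mathbb{F}_q^* = \langle \gamma \rangle$, once one observes that the cyclotomic classes are exactly the cosets of the index-$e$ subgroup $\langle \gamma^e \rangle$ and that $-1 = \gamma^{(q-1)/2}$. The only subtlety worth mentioning explicitly is verifying that $e/2$ is a legitimate integer in the odd-$f$ case, which follows from $q$ being odd. The notational discrepancy between $C_i^e$ (as introduced earlier) and $D_i^e$ (as appearing in the lemma) should be read as identifying the two symbols, since both denote the same cyclotomic class $\gamma^i \langle \gamma^e \rangle$.
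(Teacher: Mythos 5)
Your argument is correct and is the standard one: locate $-1=\gamma^{(q-1)/2}$ in a cyclotomic class by the parity of $f$, then use that multiplication by a fixed element permutes the cosets of $\langle\gamma^e\rangle$ by an index shift. The paper itself gives no proof of this lemma (it is quoted from the cited reference), so there is nothing to compare against; your handling of the two side points --- that $e$ must be even when $f$ is odd and $q$ is odd, and that $D_i^e$ is to be read as $C_i^e$ --- is appropriate.
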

\begin{lemma}\label{le2.0} {\rm \cite{NOW}} Let $q=em+1$ be a prime power for some positive integers $e$ and $f$. In the group ring $\mathbb{Z}\left[\mathbb{F}_{q}\right]$ we have \[
C_{i}^{e}(X)C_{j}^{e}(X^{-1})=a_{ij}\mathbf{1}+\sum_{k=0}^{e-1}(j-i,k-i)_{e}C_{k}^{e}(X)\] where \[
a_{ij}=\begin{cases} f, \text{ if } m \text{ is even and } j=i, \\
                     f, \text{ if } m \text{ is odd and } j=i+\frac{e}{2}, \\
                     0, \text{ otherwise. }\end{cases}\]
\end{lemma}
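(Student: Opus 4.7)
The plan is to compute the product $C_i^e(X) C_j^e(X^{-1})$ directly in the group ring and collect like terms. Writing
\begin{equation*}
C_i^e(X) C_j^e(X^{-1}) = \sum_{x \in C_i^e} \sum_{y \in C_j^e} X^{x-y},
\end{equation*}
the coefficient of $X^h$ in the product equals $N_{ij}(h) := |\{(x,y) \in C_i^e \times C_j^e : x - y = h\}|$. The right-hand side of the claim says that $N_{ij}(0) = a_{ij}$, and that for nonzero $h$ the value $N_{ij}(h)$ depends only on the cyclotomic class containing $h$, coinciding with the appropriate cyclotomic number of order $e$. So the proof reduces to computing $N_{ij}(h)$ in these two regimes.

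For $h = 0$, the relation $x - y = 0$ forces $x = y$, and after absorbing the inverse in $C_j^e(X^{-1})$ via Lemma \ref{le5} the count becomes $|C_i^e \cap C_{j'}^e|$, where $j' = j$ when $f$ is even and $j' = j + e/2$ when $f$ is odd. By the disjointness of cyclotomic classes, this is $f$ precisely in the stated cases and $0$ otherwise, matching the piecewise formula for $a_{ij}$. For a nonzero $h$ belonging to $C_\ell^e$, the multiplicative substitution $(x, y) \mapsto (x/h, y/h)$ is a bijection between solutions of $x - y = h$ with $(x,y) \in C_i^e \times C_j^e$ and solutions of $x' - y' = 1$ with $(x', y') \in C_{i-\ell}^e \times C_{j-\ell}^e$. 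Hence
\begin{equation*}
N_{ij}(h) = |C_{i-\ell}^e \cap (C_{j-\ell}^e + 1)| = (i - \ell, j - \ell)_e
\end{equation*}
by the very definition of cyclotomic numbers. Applying the symmetries in (\ref{eq7}) --- first $(h, k) = (e - h, k - h)$ and then the parity-dependent swap symmetry --- converts $(i - \ell, j - \ell)_e$ into $(j - i, \ell - i)_e$, which with $k = \ell$ is exactly the coefficient of $C_k^e(X)$ in the claimed expansion. Summing over the cyclotomic classes recovers $\sum_{k=0}^{e-1} (j - i, k - i)_e \, C_k^e(X)$, and attaching the $a_{ij}\mathbf{1}$ contribution from $h = 0$ finishes the identification.

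The main difficulty I anticipate is not the expansion itself, which is essentially a one-line coefficient comparison, but the index bookkeeping in the parity-odd case. Both Lemma \ref{le5} (used for $h = 0$) and the second identity of (\ref{eq7}) (used for $h \neq 0$) introduce $e/2$ shifts when $f$ is odd, and one must verify that these shifts are compatible so that the off-diagonal coefficients land cleanly in the form $(j-i, k-i)_e$ and the diagonal coefficient lands in the form stated for $a_{ij}$, with no residual $e/2$ offset left over. Once this parity-sensitive arithmetic is tracked carefully, no further cyclotomic machinery is needed.
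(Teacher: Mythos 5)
The paper offers no proof of Lemma \ref{le2.0}; it is quoted from \cite{NOW}, so your argument has to stand on its own. Your overall plan --- compare coefficients of $X^{h}$, use the scaling $(x,y)\mapsto(x/h,y/h)$ to reduce each nonzero coefficient to a cyclotomic number, then normalize indices with the symmetry identities --- is the standard route, and it goes through cleanly when $f$ is even. The trouble is concentrated exactly where you flagged it, in the odd-$f$ bookkeeping, and it is not merely unfinished: it cannot be finished as written. Take the constant term first. Counting pairs with $x-y=0$ directly gives $|C_{i}^{e}\cap C_{j}^{e}|=f\delta_{ij}$, with no parity dependence and no role for Lemma \ref{le5}. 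If instead you first rewrite $C_{j}^{e}(X^{-1})=C_{j'}^{e}(X)$ via Lemma \ref{le5}, the constant term of $C_{i}^{e}(X)C_{j'}^{e}(X)$ counts pairs with $x+z=0$, i.e.\ $|C_{i}^{e}\cap(-C_{j'}^{e})|=|C_{i}^{e}\cap C_{j}^{e}|$ again. Your step counts $x=z$ inside $C_{i}^{e}\times C_{j'}^{e}$, which applies the negation one time too few; it lands on the stated $a_{ij}$ only because of that slip.

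The nonzero coefficients have the same problem. From $(i-\ell,j-\ell)_{e}$ the identity $(h,k)=(e-h,k-h)$ gives $(\ell-i,j-i)_{e}$, and the swap identity then yields $(j-i,\ell-i)_{e}$ only when $f$ is even; when $f$ is odd it yields $(j-i+\tfrac{e}{2},\ell-i+\tfrac{e}{2})_{e}$, so the residual $\tfrac{e}{2}$ offset you worried about is genuinely there. A small example shows the obstruction lies in the statement, not just in your proof: for $q=7$, $e=2$, $C_{0}^{2}=\{1,2,4\}$, $C_{1}^{2}=\{3,5,6\}$, one has $C_{1}^{2}(X^{-1})=C_{0}^{2}(X)$, hence $C_{0}^{2}(X)C_{1}^{2}(X^{-1})=C_{0}^{2}(X)^{2}=C_{0}^{2}(X)+2C_{1}^{2}(X)$ with constant term $0$, whereas the displayed formula predicts constant term $f=3$ and coefficients $(1,0)=(1,1)=1$. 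By contrast $C_{0}^{2}(X)C_{1}^{2}(X)=3\cdot\mathbf{1}+C_{0}^{2}(X)+C_{1}^{2}(X)$ does match the formula. So the identity as transcribed is the expansion of $C_{i}^{e}(X)C_{j}^{e}(X)$ (equivalently, of the stated product under a different cyclotomic-number convention than the one defined earlier in this paper), and no argument will establish it literally as written. Before attempting a proof you should either correct the statement or pin down the exact conventions of \cite{NOW}.
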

\section{A General Upper Bound on the Peak-Sidelobe Distance of a Special Matrix}\label{sec3}

 We first formulate the following general upper bound on the periodic distance of a subset of a cyclic group.

 \begin{lemma}\label{le11} Let $D$ be a $k$-subset of $\mathbb{Z}_{v}$ with periodic distance $d$. Then \[ d \leq \left\lfloor \frac{v^{2}}{4(v-1)} \right\rfloor. \]
 \end{lemma}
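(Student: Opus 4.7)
The plan is to bound $\Lambda$ from below by a simple counting/pigeonhole argument on the multiset of nonzero differences, and then optimize over $k$.

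First I would count the total number of ordered nonzero differences coming from $D$. Since $|D| = k$, the multiset $\{g - h \mid g, h \in D,\ g \neq h\}$ has exactly $k(k-1)$ members, all lying in $\mathbb{Z}_v \setminus \{0\}$, a set of size $v-1$. By pigeonhole, at least one nonzero element of $\mathbb{Z}_v$ is represented with multiplicity at least $k(k-1)/(v-1)$. Since $\Lambda$ is defined as the maximum such multiplicity, this yields
\[
\Lambda \;\geq\; \frac{k(k-1)}{v-1}.
\]

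Next I would substitute into the definition $d = k - \Lambda$ to obtain
\[
d \;\leq\; k - \frac{k(k-1)}{v-1} \;=\; \frac{k(v-k)}{v-1}.
\]
Now the right-hand side is a quadratic in $k$ that, viewed as a function of a real variable, attains its maximum at $k = v/2$ with value $v^2/\bigl(4(v-1)\bigr)$. Hence for every admissible integer $k$,
\[
d \;\leq\; \frac{v^2}{4(v-1)}.
\]
Since $d$ is an integer, the floor can be inserted, giving the claimed bound.

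There is essentially no obstacle here: the result is purely a pigeonhole plus AM--GM style optimization and does not use any structure of $D$ beyond its size. The only thing to double-check is that the bound is valid for every $k$ in the allowed range $2 \leq k < v$; this is automatic since we pass to the continuous maximum, which dominates every integer value. The role of this lemma in the paper is presumably to serve as a baseline that the subsequent tight bound on the peak-sidelobe distance of special circulant matrices must be compared against.
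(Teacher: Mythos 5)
Your proof is correct and takes essentially the same route as the paper: the paper's double-counting identity over the difference levels is just a more explicit form of your averaging bound $\Lambda \geq k(k-1)/(v-1)$, after which both arguments maximize $k(v-k)/(v-1)$ at $k = v/2$ and invoke integrality of $d$ to insert the floor.
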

 \begin{proof}
  Suppose $D$ has difference levels $\mu_{1}<\cdots<\mu_{s}=\Lambda$, and let $t_{i}$ denote the number of members of the multiset $S=\{g-g'\mid g,g' \in D \text{ and } g \neq g'\}$ with multiplicity $\mu_{i}$. Then we have $d=k-\Lambda$ and, counting $|S|$ in two ways, we have \[
 (\Lambda-\delta_{s-1})t_{s-1}+(\Lambda-\delta_{s-2})t_{s-2}+\cdots+(\Lambda-\delta_{1})t_{1}+\Lambda (v-1-t_{1}-\cdots -t_{s-1})=k(k-1) \]
 where $\delta_{i}=\Lambda-\mu_{i}$. Notice we must have $1\leq t_{i} \leq v$ for all $i$, $1\leq i\leq s$, and all $\delta_{i}$s must be nonnegative and distinct. Set $a=v-k$. Then we have \[ \Lambda=\frac{k(k-1)+\delta_{1}t_{1}+\cdots+\delta_{s-1}t_{s-1}}{v-1} \] whence \begin{eqnarray*}
 k-\Lambda& = & v-a-\frac{(v-a)(v-a-1)}{v-1}-\frac{\delta_{1}t_{1}+\cdots+\delta_{s-1}t_{s-1}}{v-1} \\
          & = & (v-a)\frac{a}{v-1}-\frac{\delta_{1}t_{1}+\cdots+\delta_{s-1}t_{s-1}}{v-1}. \end{eqnarray*}
 By differentiating the first term of the right hand side with respect to $a$ we find that it attains its maximum value when $a=\frac{v}{2}$. Thus we get \begin{eqnarray*}
 k-\Lambda & \leq & \frac{v^{2}}{4(v-1)}.
 \end{eqnarray*}
 \end{proof}

  Again let $D$ be a $k$-subset of $\mathbb{Z}_{v}$ with periodic distance $d$ and difference levels $\mu_{1}<\cdots<\mu_{s}=\Lambda$. Let $a,\delta_{i}$ and $t_{i}$ be defined as in the proof of Lemma \ref{le11}. Suppose that
 \begin{equation} \label{eq13}
\left|k-\frac{v}{2}\right|^{2}+\sum_{i=1}^{s-1}\delta_{i}t_{i} = \frac{v^{2}}{4}-(v-1)\left\lfloor \frac{v^{2}}{4(v-1)} \right\rfloor.
 \end{equation}
Then from the proof of Lemma \ref{le11} we have \begin{eqnarray*}
 k-\Lambda & = & (v-a)\frac{a}{v-1}-\frac{\delta_{1}t_{1}+\cdots+\delta_{s-1}t_{s-1}}{v-1} \\
 & = & \frac{(\frac{v}{2}-\left|k-\frac{v}{2}\right|)(\frac{v}{2}+\left|k-\frac{v}{2}\right|)
 -\sum_{i=1}^{s-1}\delta_{i}t_{i}}{(v-1)} \\
 & = & \frac{\frac{v^{2}}{4}-\left|k-\frac{v}{2}\right|^{2}-\sum_{i=1}^{s-1}\delta_{i}t_{i}}{(v-1)} \\
 & = & \frac{v^{2}}{4(v-1)}-\frac{\left|k-\frac{v}{2}\right|^{2}+\sum_{i=1}^{s-1}\delta_{i}t_{i}}{(v-1)} \\
 & = & \left\lfloor \frac{v^{2}}{4(v-1)} \right\rfloor. \quad (\text{by } (\ref{eq13}))
\end{eqnarray*} Together with Lemma \ref{le11} we have that equality holds. In fact, from the above inequality, we can see that the condition given in (\ref{eq13}) is both necessary and sufficient.
We now have a characterization of those subsets of cyclic groups for which equality holds in Lemma \ref{le11}.

 For the remainder of the paper we will denote the value $\left\lfloor \frac{v^{2}}{4(v-1)} \right\rfloor$ by $B_{v}$, and the peak-sidelobe distance of a binary matrix $R$ by $Q_{R}$, or, if it is clear from the context, simply by $Q$.
We will need the following lemma.
\begin{lemma}\label{le12} Let $R$ be a $v\times v$ binary circulant matrix. Then $A_{R}(1,0)=A_{R}(-1,0)=A_{R}(0,1)=A_{R}(0,-1)$.
\end{lemma}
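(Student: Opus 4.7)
The plan is to reduce everything to the single equality $A_R(1,0) = A_R(0,1)$ and prove it by directly counting aligned $1$-pairs in adjacent rows versus adjacent columns. By the inversion symmetry of the aperiodic autocorrelation noted immediately after (\ref{eq1}), namely $A_R(\tau_1,\tau_2) = A_R(-\tau_1,-\tau_2)$, one has $A_R(-1,0) = A_R(1,0)$ and $A_R(0,-1) = A_R(0,1)$ for free, so only the horizontal-vertical interchange remains.

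Let $D \subseteq \mathbb{Z}_v$ be the defining set of $R$, so $R_{i,j} = 1$ iff $(i - j) \bmod v \in D$, and let $c = |\{d \in D : (d+1) \bmod v \in D\}|$ count the cyclically consecutive pairs of $D$. For $A_R(1,0) = \sum_{i=0}^{v-2} \sum_{j=0}^{v-1} R_{i,j} R_{i+1,j}$, I would show that the inner product of row $i$ with row $i+1$ equals $c$ for every admissible $i$: the product $R_{i,j} R_{i+1,j}$ equals $1$ precisely when both $(i-j) \bmod v$ and $(i+1-j) \bmod v$ lie in $D$, and each cyclically consecutive pair in $D$ corresponds to exactly one column $j \in \{0,\ldots,v-1\}$. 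Summing over the $v-1$ valid row indices yields $A_R(1,0) = (v-1)c$.

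For $A_R(0,1) = \sum_{i=0}^{v-1} \sum_{j=0}^{v-2} R_{i,j} R_{i,j+1}$, the count of cyclically adjacent $1$-pairs in row $i$ is again $c$ for every $i$, giving $vc$ in total if we permitted the wraparound column pair $(j,j+1) = (v-1,0)$. Subtracting the wraparound contributions $\sum_{i=0}^{v-1} R_{i,v-1} R_{i,0}$, which counts those $i$ with both $i$ and $(i+1) \bmod v$ in $D$ and hence equals $c$, gives $A_R(0,1) = vc - c = (v-1)c = A_R(1,0)$, completing the proof. The only delicate point is the parallel bookkeeping: the lost row pair at $i = v-1$ on one side and the wraparound column pair at $j = v-1$ on the other each remove exactly $c$, and this coincidence, forced by the circulant structure, is what makes the two aperiodic quantities agree. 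I expect no computational obstacle; the whole argument is organized around a single counting invariant ($c$) of the defining set.
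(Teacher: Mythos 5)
Your proof is correct, but it takes a genuinely different route from the paper's. The paper likewise treats $A_{R}(1,0)=A_{R}(-1,0)$ and $A_{R}(0,1)=A_{R}(0,-1)$ as immediate, and then establishes $A_{R}(-1,0)=A_{R}(0,1)$ purely formally: it invokes the persymmetry of a circulant matrix, $R_{i,j}=R_{v+1-j,v+1-i}$, and performs a change of summation indices that converts the sum over vertically adjacent $1$-pairs into the sum over horizontally adjacent ones, with no reference to the defining set. You instead evaluate both quantities explicitly in terms of the defining set $D$, showing each equals $(v-1)c$ where $c$ is the number of cyclically consecutive pairs of $D$; your bookkeeping is sound, since in either orientation the full cyclic count is $vc$ and the zero-padding excludes exactly one wraparound pair per row or per column pair, each orientation losing exactly $c$. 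What your approach buys is the explicit value $A_{R}(0,1)=(v-1)c$, which is precisely the computation the paper redoes inside the proof of Theorem \ref{th11} (where $c=\Lambda$ for a special matrix), so your version of the lemma would let that later argument be shortened; what the paper's approach buys is brevity and independence from the defining-set description, since persymmetry alone suffices.
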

\begin{proof} That $A_{R}(1,0)=A_{R}(-1,0)$ and $A_{R}(0,1)=A_{R}(0,-1)$ is clear. We show that $A_{R}(-1,0)=A_{R}(0,1)$. Note that for any $v\times v$ binary circulant matrix we must have $R_{i,j}=R_{v+1-j,v+1-i}$ for $1\leq i,j \leq v$. Thus, we have
\begin{eqnarray*} A_{R}(0,1) & = & \sum_{i=1}^{v}\sum_{j=1}^{v-1}R_{i,j}R_{i,j+1}+\sum_{i=1}^{v}R_{i,v}R_{i,v-1} \\
                             & = & \sum_{i=1}^{v}\sum_{j=2}^{v}R_{j,i}R_{j-1,i}+\sum_{i=1}^{v}R_{1,i}R_{0,i} \\
                             & = & \sum_{j=1}^{v}\sum_{i=1}^{v}R_{j,i}R_{j-1,i}\\
                             & = & A_{R}(-1,0). \end{eqnarray*}
\end{proof}
We are now ready to give an upper bound on the peak-sidelobe distance of a special matrix.
\begin{theorem}\label{th11} Let $R$ be a $v\times v$ special matrix with peak-sidelobe distance $Q$. Then \[
Q \leq (v+1)B_{v}+1. \]
\end{theorem}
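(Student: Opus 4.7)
The plan is to bound the peak-sidelobe distance by exhibiting one particular sidelobe whose value is expressible in $k$ and $\Lambda$, and then invoke Lemma \ref{le11}. First I would compute $A_R(0,1)$ directly from the circulant structure of $R$. The product $R_{g,h}R_{g,h+1}$ equals $1$ precisely when both $g-h$ and $g-h-1$ lie in $D$ modulo $v$, so letting $h$ range over $\{0,\ldots,v-2\}$ and $g$ take its unique valid value for each choice of $d=g-h$, each cyclically consecutive pair of residues in $D$ contributes exactly $v-1$ to the sum. Because $R$ is special, the number of such consecutive pairs is $\Lambda$, so $A_R(0,1)=(v-1)\Lambda$; by Lemma \ref{le12} the same value is attained at $A_R(\pm 1,0)$ and $A_R(0,\pm 1)$.

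Since $A_R(0,1)$ is a particular sidelobe, the maximum sidelobe is at least $(v-1)\Lambda$, and hence
\[
Q \;\leq\; vk-(v-1)\Lambda \;=\; k+(v-1)(k-\Lambda) \;=\; k+(v-1)d,
\]
where $d=k-\Lambda$ is the periodic distance of $D$. Applying Lemma \ref{le11} yields $d\leq B_v$, so $Q\leq k+(v-1)B_v$.

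To finish it suffices to prove $k\leq 2B_v+1$, and this is the main obstacle. My intended route is the equality characterization of Lemma \ref{le11} recorded in (\ref{eq13}): when $d=B_v$ one obtains $|k-v/2|^2\leq v^2/4-(v-1)B_v$, which confines $k$ to an interval of radius $\sqrt{v^2/4-(v-1)B_v}$ around $v/2$, and a residue analysis of $v\bmod 4$, together with the integrality of $k,B_v$ and the nonnegativity of $\sum_i\delta_i t_i$, should pin $k$ to $\{0,\ldots,2B_v+1\}$; when $d<B_v$ the slack $(v-1)(B_v-d)\geq v-1$ exceeds any admissible enlargement of $k$ up to $v$, so the bound follows automatically. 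The delicate work will be closing the integer gap in the case $d=B_v$, in particular ruling out extremal configurations where the interval afforded by (\ref{eq13}) alone is too generous; here the specialness hypothesis (forcing the consecutive-pair count to coincide with the maximum difference level $\Lambda$) will have to be used in addition to (\ref{eq13}).
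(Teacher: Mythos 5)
Your reduction is sound and follows essentially the same route as the paper: you compute $A_R(0,1)=(v-1)\Lambda$ from the circulant structure and the specialness of $D$, observe that any single sidelobe bounds $Q$ from above (a small simplification over the paper, which additionally verifies that $A_R(0,\pm1)$ is in fact the nearest sidelobe), and arrive at $Q\leq k+(v-1)d$ with $d\leq B_v$ by Lemma \ref{le11}. Your dispatch of the case $d<B_v$ is also correct. This leaves exactly the claim the paper itself relies on, namely that $k\leq 2B_v+1$ (equivalently $\Lambda\leq B_v+1$) whenever $d=B_v$; the paper asserts this is ``easy to deduce'' from (\ref{eq13}) and gives no further detail.

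The gap is that you do not prove this claim, and the route you sketch cannot close it on its own. From (\ref{eq13}) and $\sum_i\delta_i t_i\geq 0$ one gets only $\left|k-\frac{v}{2}\right|\leq\sqrt{\frac{v^2}{4}-(v-1)B_v}$, and this radius is of order $\sqrt{v}/2$, not $O(1)$: for instance when $v\equiv 1\ (\mathrm{mod}\ 4)$ one computes $B_v=\frac{v-1}{4}$ and $\frac{v^2}{4}-(v-1)B_v=\frac{2v-1}{4}$, so the interval admits every integer $k$ up to roughly $\frac{v}{2}+\frac{\sqrt{2v-1}}{2}$, whereas $2B_v+1=\frac{v+1}{2}$. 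For $v=101$ this leaves $k\in\{52,\dots,57\}$ unexcluded, and the values of $\sum_i\delta_i t_i$ forced by (\ref{eq13}) for such $k$ are small positive integers arithmetically consistent with a two-level difference structure, so integrality plus a residue analysis of $v\bmod 4$ does not pin $k$ down. Some further input is genuinely needed --- presumably the specialness hypothesis or a nonexistence argument for the extremal configurations, as you anticipate --- but anticipating the obstacle is not the same as overcoming it. You should also be aware that the paper's own proof is no more explicit at precisely this point, so your proposal has in effect reduced the theorem to the same unproved assertion rather than to something established elsewhere.
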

\begin{proof} Let $R$ have defining set $D$ and largest difference level $\Lambda$. For any $i',\delta\in\mathbb{Z}$ we have \begin{eqnarray*}
\sum_{j=1}^{v}R_{i',j}R_{i',j+\delta} & \leq & \begin{cases} \Lambda-1, \text{ if } R_{i',1}=R_{i',v}=1, \\
                                                      \Lambda, \text{ otherwise,}\end{cases}\\
                                      & = & \sum_{j=1}^{v}R_{i',j}R_{i',j+1}.\end{eqnarray*} Using Lemma \ref{le12} it is easy to deduce that
$A_{R}(\tau_{1},\tau_{2})\leq A_{R}(\pm 1,0)=A_{R}(0,\pm 1)$ for all $\tau_{1},\tau_{2}$ not both zero. The number of values of $i'$ for which $R_{i',1}=R_{i',v}=1$ is equal to the number of pairs of consecutive residues in $D$ which, since $D$ is special, is $\Lambda$. Thus \[
A_{R}(0,1)=\sum_{i=1}^{v}\sum_{j=1}^{v}R_{i,j}R_{i,j+1}=\Lambda(\Lambda-1)+(v-\Lambda)\Lambda=(v-1)\Lambda,\] and \[
Q=A_{R}(0,0)-A_{R}(0,1)=vk-(v-1)\Lambda=v(k-\Lambda)+\Lambda.\]
Note that we have $2\leq k<v$ and $0\leq\Lambda\leq k$. We claim that $Q$ reaches its maximum value when $k-\Lambda$ reaches its maximum value. To see this, suppose that $k-\Lambda$ is at its maximum value and that there are $k^{*},\Lambda^{*}$, also satisfying $2\leq k^{*}<v$ and $0\leq \Lambda^{*} \leq k$, such that $k^{*}-\Lambda^{*}<k-\Lambda$ and $v(k-\Lambda)+\Lambda<v(k^{*}-\Lambda^{*})+\Lambda^{*}$. Then we have $v((k-\Lambda)-(k^{*}-\Lambda^{*}))<\Lambda^{*}-\Lambda$ whence $v<\Lambda^{*}-\Lambda$, which is impossible. This proves the claim. By Lemma \ref{le11}, the maximum possible value of $k-\Lambda$ is $B_{v}$. From Equation (\ref{eq13}) it is easy to deduce that $\Lambda\leq B_{v}+1$ whenever $k-\Lambda=B_{v}$. Thus $Q\leq (v+1)B_{v}+1$, and we are done.
\end{proof}
We will say that a $v\times v$ special matrix $R$ whose peak-sidelobe distance meets the bound given in Theorem \ref{th11} is {\it s-optimal}, and we say it is {\it near s-optimal} if it has a peak-sidelobe distance of $(v+1)B_{v}$.

\section{Constructions of s-Optimal Binary Matrices from Difference and Almost Difference Sets}\label{sec4}

In this section we will use cyclotomic classes and the group ring notation introduced in Section \ref{sec2}.  When convenient, we will denote the subset $\{b_{1},...,b_{k}\}\times S$ of an additive Abelian group $A \times B$ by $\{b_{1}\cdots,b_{k}\}S(x)$ where $S(x)$ is the polynomial in $\mathbb{Z}\left[A\right]$ corresponding to the subset $S$. We will only discuss those constructions which produce $v\times v$ binary circulant matrices whose peak-sidelobe distance is either $(v+1)B_{v}+1$ or $(v+1)B_{v}$, i.e. either s-optimal or near s-optimal. If we take the defining set $D$ of a binary circulant matrix $R$ to be a $(v,k,\lambda)$ difference set then, since $D$ only has one difference level, we have that the number of pairs of consecutive residues in $D$ is $\Lambda=\lambda$ and $R$ is special.
\par
Difference sets with parameters $(v,\frac{v-1}{2},\frac{v-1}{4})$ or $(v,\frac{v+1}{2},\frac{v+1}{4})$ are called {\it Paley-hadamard difference sets}.
Cyclic Paley-Hadamard difference sets, up to complementation (see Theorem \ref{th23}), include the following \cite{ACH}:\begin{enumerate}[(A)]
\item with parameters $(p,\frac{p+1}{2},\frac{p+1}{4})$, where $p\equiv3($mod $4)$ is prime, and the difference set is given by $D=D_{0}^{(2,p)}\cup \{0\}$,
\item with parameters $(2^{t}-1,2^{t-1},2^{t-2}+1)$, for descriptions see \cite{DIL},\cite{DD},\cite{GMW}, \cite{POTT} and \cite{XIANG},
\item with parameters $(v,\frac{v+1}{2},\frac{v+1}{4})$, where $v=p(p+2)$ and both $p$ and $p+2$ are primes. These are twin prime difference sets, and are defined by $\{(g,h)\in\mathbb{Z}_{p}\times\mathbb{Z}_{p+2}\mid g\neq0\neq h\text{ and } \chi(g)\chi(h)=-1\}\cup\{(0,h)\mid h\in \mathbb{Z}_{p+2}^{*}\}$ where $\chi(x)=1$ if $x$ is a nonzero square and $\chi(x)=-1$ otherwise \cite{JP},
\item with parameters $(p,\frac{p+1}{2},\frac{p+1}{4})$, where $p$ is a prime of the form $p=4s^2+27$. These are cyclotomic difference sets and are described in \cite{STO} as $D=D_{0}^{(6,p)}\cup D_{1}^{(6,p)}\cup D_{3}^{(6,p)}\cup \{0\}$.
    \end{enumerate}
We have the following construction.
\begin{theorem}\label{th20} Let $R$ be a $v\times v$ binary circulant matrix whose defining set is a Paley-Hadamard difference set of type (A), (B), (C) or (D). Then $R$ is near s-optimal.
\end{theorem}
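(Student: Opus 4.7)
The plan is to verify the hypotheses of Theorem \ref{th11} for each of the four families and then read off $Q=(v+1)B_v$ directly from the formula $Q=v(k-\Lambda)+\Lambda$ that appears inside its proof.

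The first (and most computational) step is showing that $B_v=(v+1)/4$ for every $v$ arising in (A)--(D), which reduces to checking $v\equiv 3\pmod 4$ in each case. For (A) and (D) the defining prime is $\equiv 3\pmod 4$ (note $4s^{2}+27\equiv 3\pmod 4$ for every integer $s$); for (B) we have $v=2^{t}-1\equiv 3\pmod 4$ as soon as $t\ge 2$; and for (C) the twin primes $p,p+2$ must be of opposite residues modulo $4$, so $p(p+2)\equiv 3\pmod 4$. Writing $v=4t+3$, a one-line long division yields
\[
\frac{v^{2}}{4(v-1)}\;=\;(t+1)+\frac{1}{16t+8},\qquad\text{hence}\qquad B_v=t+1=\frac{v+1}{4}.
\]

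Next I would confirm that in every family the defining set has the uniform Paley--Hadamard parameters $(v,(v+1)/2,(v+1)/4)$: the set $D_{0}^{(2,p)}\cup\{0\}$ in (A) has $(p+1)/2$ elements; the Singer construction in (B) has $k=2^{t-1}=(v+1)/2$; the twin-prime set in (C) totals $\tfrac{(p-1)(p+1)}{2}+(p+1)=\tfrac{(p+1)^{2}}{2}=\tfrac{v+1}{2}$ elements; and in (D) we obtain $3\cdot\tfrac{p-1}{6}+1=(p+1)/2$. Because $D$ is a difference set, its only difference level is $\lambda=(v+1)/4$, and the number of pairs of consecutive residues in $D$ equals the multiplicity of $1$ in the difference multiset, which is also $\lambda$; thus $\Lambda=\lambda=B_v$, $k-\Lambda=B_v$, and $D$ is automatically special.

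Theorem \ref{th11} now applies, and the formula $Q=v(k-\Lambda)+\Lambda$ established in its proof collapses to $Q=vB_v+B_v=(v+1)B_v$, which is precisely near s-optimality. The only subtle point is to use the parameters listed in (A)--(D) and not the complementary form $(v,(v-1)/2,(v-3)/4)$ coming from Theorem \ref{th23}: that form would give $\Lambda=B_v-1$ and hence $Q=(v+1)B_v-1$, one short of the target. Beyond that, the proof is just parameter-bookkeeping followed by a single substitution, so I do not anticipate any serious obstacle.
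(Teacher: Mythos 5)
Your proposal is correct and follows essentially the same route as the paper, which simply asserts that ``properties of the floor function'' give $k-\lambda=B_v$ and $\lambda=B_v$ in each of the cases (A)--(D); you have filled in exactly those checks (the residue of $v$ modulo $4$, the division giving $B_v=(v+1)/4$, the cardinality counts, and the observation from the paragraph preceding the theorem that a difference set automatically yields a special matrix with $\Lambda=\lambda$) and then read off $Q=(v+1)B_v$ from the formula in the proof of Theorem \ref{th11}. One minor remark: your computation for family (B) implicitly corrects the paper's stated parameter $\lambda=2^{t-2}+1$ to the value $2^{t-2}$ forced by the identity $k(k-1)=\lambda(v-1)$, which is the right call.
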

\begin{proof}
It is a simple matter of using properties of the floor function to check that, in each of the cases (A), (B), (C) and (D), we have $k-\lambda=B_{v}$ and $\lambda=B_{v}$.
\end{proof}
If we take the defining set $D$ of a binary circlant matrix $R$ to be a $(v,k,\lambda,t)$ almost difference set, then $R$ is special only if the number of pairs of consecutive residues in $D$ is $\Lambda=\lambda+1$. In many cases it is difficult to know whether an almost difference set is special. In some cases, however, we can count the number of pairs of consecutive residues.

We will need the following lemma that can be found in \cite{D}.

\begin{lemma}\label{le21} If $q \equiv 1$ (mod 4) is a prime power then the cyclotomic numbers of order two are given by
\begin{eqnarray*}
(0,0) & = & \frac{q-5}{4},                         \\
(0,1) & = & (1,0) = (1,1) = \frac{q-1}{4}.
\end{eqnarray*}If $q \equiv 3$ (mod 4) then are given by
\begin{eqnarray*}
(0,1) & = & \frac{q+1}{4},                         \\
(0,0) & = & (1,0) = (1,1) = \frac{q-3}{4}.
\end{eqnarray*}
\end{lemma}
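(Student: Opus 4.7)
The plan is to combine the symmetry relations stated as equations (\ref{eq7}) in the paper with a simple counting identity for row sums of the cyclotomic number ``table.'' Since $e=2$ there are only four unknowns $(0,0), (0,1), (1,0), (1,1)$, so once we pin down the equalities forced by the symmetries and add two linear relations from counting, the values are completely determined.

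For the symmetry step, I would first apply $(h,k)=(e-h,k-h)$ with $e=2$. This yields $(1,0)=(1,-1)=(1,1)$, independent of the parity of $f=(q-1)/2$. Next I would use the second relation in (\ref{eq7}): when $q\equiv 1\pmod 4$ we have $f$ even, so $(h,k)=(k,h)$, giving $(0,1)=(1,0)$; hence $(0,1)=(1,0)=(1,1)$ and there are only two unknowns $(0,0)$ and $(0,1)$. When $q\equiv 3\pmod 4$ we have $f$ odd, so $(h,k)=(k+1,h+1)\pmod 2$, giving $(0,0)=(1,1)$; combined with $(1,0)=(1,1)$ this yields $(0,0)=(1,0)=(1,1)$, again leaving only two unknowns.

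For the counting step I would use the identity
\[
\sum_{k=0}^{e-1}(h,k)_e \;=\; f-\varepsilon_h,
\]
which counts, for fixed $h$, the number of $s\in C_h^e$ whose translate $s+1$ lies in some class $C_k^e$, i.e.\ is nonzero; here $\varepsilon_h$ is $1$ exactly when $-1\in C_h^e$ and $0$ otherwise. By Lemma \ref{le5} (or directly from the definition of the classes), $-1\in C_0^2$ when $f$ is even and $-1\in C_1^2$ when $f$ is odd. Specializing to $e=2$ gives two linear equations in the two surviving unknowns for each parity case, which solve at once: for $q\equiv 1\pmod 4$, the system $2(0,1)=f$ and $(0,0)+(0,1)=f-1$ yields $(0,1)=(q-1)/4$ and $(0,0)=(q-5)/4$; for $q\equiv 3\pmod 4$, the system $2(0,0)=f-1$ and $(0,0)+(0,1)=f$ yields $(0,0)=(q-3)/4$ and $(0,1)=(q+1)/4$.

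The argument is entirely mechanical once the right bookkeeping ingredients are in place, so the only ``obstacle'' is making sure the symmetries from (\ref{eq7}) and the sum identity are both applied with the correct parity case distinction. I would therefore separate the proof cleanly into the two cases $q\equiv 1\pmod 4$ and $q\equiv 3\pmod 4$, and in each case first collapse the table using symmetry and then close the system using the row-sum identity above.
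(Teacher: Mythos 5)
Your derivation is correct, but note that the paper offers no proof of this lemma at all: it is quoted verbatim from the classical cyclotomy literature (\cite{D}), so what you have supplied is a self-contained replacement for a citation. Your route is the standard one and it closes: the relation $(h,k)=(e-h,k-h)$ with $e=2$ gives $(1,0)=(1,1)$ in both parity cases, the second symmetry collapses the table to two unknowns, and the row-sum identity $\sum_{k}(h,k)=f-\varepsilon_h$ with $\varepsilon_h=[-1\in C_h^2]$ (justified by Lemma \ref{le5}) yields exactly the two linear equations you wrote in each case; solving them reproduces the stated values. What your proof buys over the paper's bare citation is that it exposes precisely which ingredients (the two symmetries plus one counting identity) force the answer, and it generalizes in spirit to higher $e$ where the same bookkeeping is the starting point.

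One caveat you should make explicit: your row-sum identity is the one attached to the classical convention $(h,k)=\#\{s\in C_h^e : s+1\in C_k^e\}$, which is indeed the convention under which both the symmetry relations (\ref{eq7}) and the tabulated values of the lemma are stated. The paper's own displayed definition $(i,j)_e=|C_i^e\cap(C_j^e+1)|$ is the transpose of this, and under that literal definition the row sum would instead be $f-[\,1\in C_h^e\,]$ and the values of $(0,1)$ and $(1,0)$ would be interchanged in the $q\equiv 3\pmod 4$ case (e.g.\ for $q=7$ one computes $|C_1^2\cap(C_0^2+1)|=2$, not $|C_0^2\cap(C_1^2+1)|$). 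This is an inconsistency internal to the paper rather than an error in your argument, but since the two conventions disagree exactly in the asymmetric case you are proving, you should state at the outset which definition of $(h,k)$ you are using so that the identity $\sum_k(h,k)=f-[-1\in C_h^2]$ is actually the correct one for it.
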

\begin{theorem}\label{th25} Let $p\equiv1($mod $4)$ be a prime and $R$ a $p\times p$ binary circulant matrix with defining set $D\subseteq\mathbb{Z}_{p}$. Then \begin{enumerate}
\item if $D=D_{0}^{(2,p)}\cup \{0\}$ then $R$ is s-optimal, and
\item if $D=D_{1}^{(2,p)}$ then $R$ is near s-optimal.\end{enumerate}
\end{theorem}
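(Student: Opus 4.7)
The plan is to compute, for each of the two defining sets, the group-ring product $D(X)D(X^{-1})$ using the cyclotomic machinery of Lemmas \ref{le5}, \ref{le2.0} and \ref{le21}, read off the largest difference level $\Lambda$ and the multiplicity of $1$ in the difference multiset, verify specialness by comparing the latter to $\Lambda$, and then plug into the formula $Q=v(k-\Lambda)+\Lambda$ extracted from the proof of Theorem \ref{th11}. Before starting I would record the value $B_p=\lfloor p^{2}/(4(p-1))\rfloor=(p-1)/4$ for $p\equiv 1\pmod 4$ (writing $p=4t+1$ and noting that $p^{2}/(4(p-1))=t+\tfrac12+\tfrac{1}{16t}$ lies strictly between $t$ and $t+1$). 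So the targets are $(p+1)B_p+1=(p^{2}+3)/4$ and $(p+1)B_p=(p^{2}-1)/4$.

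For the computation itself, since $p\equiv 1\pmod 4$ the exponent $f=(p-1)/2$ is even, so Lemma \ref{le5} gives $D_i^{(2,p)}(X^{-1})=D_i^{(2,p)}(X)$ and Lemma \ref{le2.0} produces a constant term equal to $f$ exactly when $j=i$. Together with the cyclotomic numbers from Lemma \ref{le21} this yields, for instance,
\[
D_0(X)D_0(X^{-1})=\tfrac{p-1}{2}\,\mathbf{1}+\tfrac{p-5}{4}D_0(X)+\tfrac{p-1}{4}D_1(X),
\]
and an analogous identity for $D_1(X)D_1(X^{-1})$ with the roles of $(p-5)/4$ and $(p-1)/4$ swapped. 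In case (1), writing $D(X)=\mathbf{1}+D_0(X)$ and expanding $D(X)D(X^{-1})$ I would collect the contributions to $\mathbf{1}$, $D_0(X)$, $D_1(X)$ and obtain difference levels $\mu_1=(p-1)/4$ and $\Lambda=\mu_2=(p+3)/4$, with $k=(p+1)/2$. In case (2), the expansion is immediate and gives $\Lambda=(p-1)/4$ with $k=(p-1)/2$.

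Next I would verify specialness. Since $1$ is a quadratic residue modulo $p$ we have $1\in D_0^{(2,p)}$, so the number of pairs of consecutive residues in $D$ equals the multiplicity of $1$ in the expansion of $D(X)D(X^{-1})$, which is the coefficient of $D_0(X)$. Reading this coefficient from the calculations above yields $(p+3)/4=\Lambda$ in case (1) and $(p-1)/4=\Lambda$ in case (2); hence $R$ is special in both cases, and Theorem \ref{th11}'s formula applies.

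Finally I would substitute into $Q=p(k-\Lambda)+\Lambda$. In case (1) this gives
\[
Q=p\!\left(\tfrac{p+1}{2}-\tfrac{p+3}{4}\right)+\tfrac{p+3}{4}=\tfrac{p(p-1)+p+3}{4}=\tfrac{p^{2}+3}{4}=(p+1)B_p+1,
\]
and in case (2) it gives $Q=p(p-1)/4+(p-1)/4=(p^{2}-1)/4=(p+1)B_p$, establishing s-optimality and near s-optimality respectively. I do not foresee a serious obstacle: the only point that needs care is making sure the coefficients in Lemma \ref{le2.0} are applied with the correct indices and that the identity contribution from adjoining $\{0\}$ in case (1) is properly combined with the cyclotomic cross terms before simplifying with $(p-5)/4+2=(p+3)/4$.
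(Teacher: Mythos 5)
Your proposal is correct and follows essentially the same route as the paper: compute $D(X)D(X^{-1})$ via Lemmas \ref{le5}, \ref{le2.0} and \ref{le21} to get $\frac{p+1}{2}\mathbf{1}+\frac{p+3}{4}D_{0}(X)+\frac{p-1}{4}D_{1}(X)$ in case (1), use $1\in D_{0}^{(2,p)}$ to conclude the number of consecutive pairs equals $\Lambda$ (so $D$ is special), and then verify $k-\Lambda=B_{p}$ and the value of $\Lambda$ to land on $(p+1)B_{p}+1$ resp.\ $(p+1)B_{p}$. You in fact supply more detail than the paper (the explicit computation of $B_{p}=(p-1)/4$ and the case-(2) expansion, which the paper omits as "similar"), and all your arithmetic checks out.
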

\begin{proof} We show only the first case as the second case can be shown in a similar way. To see that $D$ is special, note that, using Lemmas \ref{le2.0} and \ref{le21} we have \[
D(X)D(X^{-1})=\frac{p+1}{2}\cdot\mathbf{1}+\frac{p+3}{4}D_{0}^{(2,p)}(X)+\frac{p-1}{4}D_{1}^{(2,p)}(X).\]Since $1\in D_{0}^{(2,p)}$ we have that the number of pairs of consecutive residues in $D$ is $\Lambda=\frac{p+3}{4}$, whence $D$ is special. It is easy to show that $k-\Lambda=B_{p}$ and $\Lambda=B_{p}+1$ hold.
\end{proof}
Note that from the proof of Theorem \ref{th24} one could also deduce that the defining set is an almost difference set. The next construction also uses quadradic residues. We will need the following lemma.
\begin{lemma}\label{le25} {\rm \cite{YZ}} Let $p\equiv 3($mod $4)$ be a prime. Then \[
D=(\{0\}\times D_{0}^{(2,p)})\cup(\{1,2,3\}\times D_{1}^{(2,p)})\cup\{(0,0),(1,0),(3,0)\}\] is a $(4p,2p+1,p,p-1)$ almost difference set in $\mathbb{Z}_{4p}$.
\end{lemma}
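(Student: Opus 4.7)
The plan is to work in the group ring $\mathbb{Z}[\mathbb{Z}_{4}\times\mathbb{Z}_{p}]$, identifying $\mathbb{Z}_{4p}\cong\mathbb{Z}_{4}\times\mathbb{Z}_{p}$ by CRT, and to verify the almost difference set property by computing the group-ring product $D(X,Y)\,D(X^{-1},Y^{-1})$ explicitly. Setting $Q(Y)=D_{0}^{(2,p)}(Y)$, $N(Y)=D_{1}^{(2,p)}(Y)$, $S=1+X+X^{3}$, and $T=X+X^{2}+X^{3}$, the defining set is represented by $D(X,Y)=S+Q+TN$, and adding up the three summands gives $|D|=3+\tfrac{p-1}{2}+3\cdot\tfrac{p-1}{2}=2p+1$, matching the claimed $k$.

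Because $p\equiv 3\pmod{4}$, Lemma \ref{le5} gives $-D_{0}^{(2,p)}=D_{1}^{(2,p)}$, whence $Q(Y^{-1})=N(Y)$ and $N(Y^{-1})=Q(Y)$; meanwhile $S(X^{-1})=S$ and $T(X^{-1})=T$ hold trivially in $\mathbb{Z}_{4}$. Consequently $D(X^{-1},Y^{-1})=S+N+TQ$. The only nontrivial cyclotomic input, obtained from Lemmas \ref{le21} and \ref{le2.0}, consists of three $\mathbb{Z}[\mathbb{Z}_{p}]$-identities:
\[
QN=\tfrac{p-1}{2}\mathbf{1}+\tfrac{p-3}{4}(Q+N),\quad Q^{2}=\tfrac{p-3}{4}Q+\tfrac{p+1}{4}N,\quad N^{2}=\tfrac{p+1}{4}Q+\tfrac{p-3}{4}N.
\]

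Next I expand the nine terms of $(S+Q+TN)(S+N+TQ)$ and sort them by $Y$-dependence. The $Q$- and $N$-linear pieces combine as $S(1+T)(Q+N)=3(1+T)(Q+N)$, while the quadratic block is $TQ^{2}+TN^{2}+(1+T^{2})QN$. Using $S^{2}=T^{2}=3+2T$ and substituting the three identities above, the product collapses to
\[
D(X,Y)\,D(X^{-1},Y^{-1})=(2p+1)\mathbf{1}+(p+1)T+\bigl[p+(p+1)T\bigr](Q+N).
\]
Reading off coefficients of elements of $\mathbb{Z}_{4}\times\mathbb{Z}_{p}$: the identity $(0,0)$ has multiplicity $2p+1=|D|$; each of the $p-1$ elements of $\{0\}\times(\mathbb{Z}_{p}\setminus\{0\})$ has multiplicity $p$; and each of the remaining $3p$ nonidentity elements has multiplicity $p+1$. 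This is precisely the signature of a $(4p,2p+1,p,p-1)$ almost difference set, and the global count $(p-1)p+3p(p+1)=(2p+1)(2p)=k(k-1)$ provides a final consistency check.

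The main obstacle is purely the bookkeeping: the calculation lives simultaneously in $\mathbb{Z}[\mathbb{Z}_{4}]$ and $\mathbb{Z}[\mathbb{Z}_{p}]$, and one must track three distinct quadratic pieces $Q^{2},N^{2},QN$ whose identity-level coefficients $\tfrac{p+1}{4}$ and $\tfrac{p-3}{4}$ differ by exactly one; that single-unit gap is ultimately what produces the two difference levels $p$ and $p+1$. Some care is also needed to keep the conventions for cyclotomic numbers consistent between Lemmas \ref{le21} and \ref{le2.0}.
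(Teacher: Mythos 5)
Your verification is correct. Note first that the paper itself gives no proof of Lemma \ref{le25}; it is quoted from the reference \cite{YZ}, so there is no in-paper argument to compare against. Your computation is, however, exactly in the spirit of the machinery the paper does use (compare the group-ring expansion in the proof of Theorem \ref{th26}, which manipulates the same set $D$ to count consecutive pairs). I checked your three quadratic identities directly: writing $Q=C_0^{(2,p)}$, $N=C_1^{(2,p)}$ with $p\equiv 3\pmod 4$, the coefficient of $Y^c$ in $Q^2$ is $|C_0\cap(C_1+c)|$, giving $\tfrac{p-3}{4}$ on $C_0$ and $\tfrac{p+1}{4}$ on $C_1$ and no identity term; $N^2$ is the mirror image; and $QN$ picks up the identity coefficient $\tfrac{p-1}{2}$ because $-C_0=C_1$. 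All three match what you wrote (a sanity check at $p=3$, where $Q^2=Y^2=N$, confirms the orientation). The subsequent bookkeeping with $S^2=T^2=3+2T$ and $S(1+T)=3(1+T)$ is right, and the resulting coefficients $2p+1$, $p+1$, $p$, $p+1$ on the four orbit types give precisely the $(4p,2p+1,p,p-1)$ signature, with the global count $k(k-1)$ closing the argument. The caveat you raise about conventions is genuine and worth keeping: the paper's Lemma \ref{le2.0} as literally printed (with $C_j^e(X^{-1})$ and the stated $a_{ij}$) would assign $Q^2$ a spurious identity coefficient of $f$ in the $f$-odd case; the identities you actually use are the correct ones, consistent with Lemma \ref{le21} under the standard reading of the cyclotomic numbers.
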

\begin{theorem}\label{th26} Let $p\equiv 3($mod $4)$ be a prime. Let $R$ be the $4p\times 4p$ binary circulant matrix with defining set \[
D=(\{0\}\times D_{0}^{(2,p)})\cup(\{1,2,3\}\times D_{1}^{(2,p)})\cup\{(0,0),(1,0),(3,0)\}\subseteq\mathbb{Z}_{4p}.\] Then $R$ is s-optimal.
\end{theorem}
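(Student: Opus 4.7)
The plan is to apply Theorem \ref{th11} after establishing that $D$ is special. By Lemma \ref{le25}, $D$ is a $(4p,2p+1,p,p-1)$ almost difference set with difference levels $p < p+1 = \Lambda$, so specialness reduces to showing that the number of pairs of consecutive residues in $D$ equals $\Lambda = p+1$.

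The natural setup is the CRT isomorphism $\mathbb{Z}_{4p}\cong\mathbb{Z}_4\times\mathbb{Z}_p$, under which the generator $1\in\mathbb{Z}_{4p}$ corresponds to $(1,1)$. I would partition $D$ into the four ``strip'' pieces $\{0\}\times D_0^{(2,p)}$ and $\{i\}\times D_1^{(2,p)}$ for $i=1,2,3$, together with the three isolated extras $(0,0),(1,0),(3,0)$, and count, piece by piece, how many $x$ in that piece satisfy $x+(1,1)\in D$. For an element $(a,b)$ in a strip, the successor $(a+1,b+1)$ lies either in some other strip (contributing a cyclotomic term $(j,j')_2$ computable via Lemma \ref{le21}) or, in the boundary case $b=-1$, lands on an element of the form $(a+1,0)$, which contributes $1$ exactly when that element is one of the three extras. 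Since $p\equiv 3\pmod 4$, we have $-1\in D_1^{(2,p)}$, which pins down precisely when these boundary terms fire. The three extras themselves are then treated as separate starting points: each is mapped to a potential successor of the form $(a+1,1)$, and since $1$ is a square one can read off directly which of these successors lie in $D$.

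Combining the strip contributions from Lemma \ref{le21} with the boundary terms and the three extras should give a total of exactly $p+1$, so $D$ is special. Invoking Theorem \ref{th11} with $v=4p$, $k=2p+1$, $\Lambda=p+1$ then yields $Q_R = v(k-\Lambda)+\Lambda = 4p^2+p+1$. Finally, since $4p^2=(4p-1)p+p$ with $0<p<4p-1$, we have $B_{4p}=p$, and hence $(v+1)B_v+1 = (4p+1)p+1 = 4p^2+p+1 = Q_R$, establishing s-optimality.

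The main obstacle is the careful case analysis in the pair count: besides the four ``cyclotomic'' strip-to-strip contributions, one must track the boundary interactions where a pair has an endpoint of the form $(a,0)$, since these break the purely cyclotomic pattern, and also count pairs originating at one of the three extras. Keeping track of when $-1\in D_1^{(2,p)}$ activates an extra, and not double-counting, is where the bookkeeping is delicate; once it is laid out correctly the substitution of the values from Lemma \ref{le21} collapses the expression to $p+1$ mechanically, and the remainder of the argument is a routine verification against the bound of Theorem \ref{th11}.
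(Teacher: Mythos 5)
Your proposal is correct and follows essentially the same route as the paper: both establish specialness by counting the consecutive pairs in $D$ through the CRT identification $1\leftrightarrow(1,1)$ and the order-two cyclotomic numbers of Lemma \ref{le21} (the paper organizes this count as the coefficient of $\{1\}D_{0}^{(2,p)}(X)$ in the group-ring product $D(X)D(X^{-1})$ via Lemma \ref{le2.0}, which is the same bookkeeping you describe element by element), and then verify $k-\Lambda=B_{4p}$ and $\Lambda=B_{4p}+1$ against Theorem \ref{th11}. Your explicit evaluations $B_{4p}=p$ and $Q_{R}=4p^{2}+p+1$ supply the arithmetic the paper leaves as ``easy to show.''
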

\begin{proof} Let $D_{i}$ denote $D_{i}^{(2,p)}$. To see that $D$ is special, using Lemmas \ref{le2.0} and \ref{le21} we have \small{\begin{eqnarray*}
D(X)D(X^{-1}) & = &
\left[ \{0\}(D_{0}(X)+\mathbf{1})+\{1,2,3\}(D_{1}(X)+\mathbf{1})\right]\left[\{0\}(D_{0}(X^{-1})+\mathbf{1})+\{1,2,3\}(D_{1}(X^{-1})+\mathbf{1})\right]\\
              & = & \{0\}(D_{0}(X)D_{0}(X^{-1})+D_{0}(X)+D_{1}(X)+\mathbf{1})\\
              &\, & \quad+\{1,2,3\}(D_{0}(X)D_{1}(X^{-1})+2D_{1}(X)D_{1}(X^{-1})+3D_{0}(X)+3D_{1}(X)+\mathbf{1})\\
              & = & \{0\}\left [(1,0)+(1,1)+1)D_{0}(X)+((1,1)+(1,0)+1)D_{1}(X)\right]\\
              &\, & \quad+\{1,2,3\}\left[((0,0)+(0,1)+2(1,1)+3)D_{0}(X)+((0,1)+(0,0)+2(1,0)+3)D_{1}(X)\right]\\
              &\, & \quad+(2p+1)\mathbf{1}.
\end{eqnarray*}}
If $\phi:\mathbb{Z}_{4p}\rightarrow\mathbb{Z}_{4}\times\mathbb{Z}_{p}$ is the map given by the Chinese Remainder Theorem, it is easy to see that $\phi^{-1}((1,1))=1$. Then since $1\in D_{0}$, we need only consider the coefficient of $\{1\}D_{0}$, which is $(0,0)+(0,1)+2(1,1)+3=p+1$. Thus, by Lemma \ref{le25}, the number of pairs of consecutive residues in $D$ is $\Lambda$, whence $D$ is special. It is easy to show that $k-\Lambda=B_{p}$ and $\Lambda=B_{p}+1$ hold.
\end{proof}
We next show a construction from cyclotomic classes of order four. We will use the following lemmas.
\begin{lemma}\label{le22} {\rm \cite{DHM}} Let $q=4f+1$ be a prime power with $f$ odd. The five distinct cyclotomic numbers are
\begin{eqnarray*}
(0,0) & = & (2,2) = (2,0) = \frac{q-7+2x}{16}    \\
(0,1) & = & (1,3) = (3,2) = \frac{q+1+2x-8y}{16} \\
(1,2) & = & (0,3) = (3,1) = \frac{q+1+2x+8y}{16} \\
(0,2) & = & \frac{q+1-6x}{16}                    \\
\text{all others} & = & \frac{q-3-2x}{16}
\end{eqnarray*} where $q=x^{2}+4y^{2}$ for $x,y \in \mathbb{Z}$ with $x \equiv 1$ (mod 4). Here, $y$ is two-valued depending on the choice of the primitive root $\alpha$ defining the cyclotomic classes (see page 400 of {\rm \cite{D}}).
\end{lemma}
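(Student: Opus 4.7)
The plan is to combine the two symmetries in equations (7)--(8) with the standard row/column sum identities for cyclotomic numbers and a quartic Jacobi sum evaluation. The symmetries collapse the 16 numbers $(h,k)_4$ to exactly five distinct values; the sum identities furnish three linear relations among them; and the Jacobi sum supplies two additional relations that encode $x$ and $y$ from the decomposition $q = x^{2}+4y^{2}$.

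First I would enumerate the orbits of $\{0,1,2,3\}^2$ under the two involutions $(h,k)\mapsto(4-h,k-h)$ and, for $f$ odd, $(h,k)\mapsto(k+2,h+2)$. A short case-check gives exactly five orbits,
\[
\{(0,0),(2,0),(2,2)\},\ \{(0,1),(1,3),(3,2)\},\ \{(0,2)\},\ \{(0,3),(1,2),(3,1)\},
\]
together with the remaining six pairs forming the ``all others'' orbit; this confirms the partition implicit in the statement of the lemma.

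Next I would apply the identities $\sum_{k}(h,k)_4 = f-\delta_{h,0}$ and $\sum_{h}(h,k)_4 = f-\delta_{k,\ell}$, where $\ell$ is the unique index with $-1\in C_{\ell}^{4}$. Since $f$ is odd, $-1=\gamma^{2f}$ lies in $C_{2}^{4}$, so $\ell=2$. Substituting the five orbit values into these row and column sums gives three independent linear equations in the five unknowns, leaving two degrees of freedom that must be pinned down by finer arithmetic information.

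The main obstacle is producing those last two relations, which are what introduce $x$ and $y$. These arise from the quartic Jacobi sum $J(\chi,\chi)=\sum_{t\neq 0,1}\chi(t)\chi(1-t)$ for a multiplicative character $\chi$ of order four. Classical Gauss-sum theory gives $J(\chi,\chi)=a+2y\sqrt{-1}$ in $\mathbb{Z}[\sqrt{-1}]$, where $a^{2}+4y^{2}=q$ and $a\equiv -1\pmod 4$; setting $x=-a$ yields $x\equiv 1\pmod 4$ and $q=x^{2}+4y^{2}$. On the other hand, partitioning the sum according to the cyclotomic classes of $t$ and $1-t$ expresses $J(\chi,\chi)$ as a $\mathbb{Z}[\sqrt{-1}]$-linear combination of the $(h,k)_{4}$ with coefficients in $\{1,\sqrt{-1},-1,-\sqrt{-1}\}$. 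Matching real and imaginary parts supplies the last two linear relations. Solving the resulting $5\times 5$ system produces the formulas stated in the lemma; the two admissible signs for $y$ simply reflect the freedom to replace $\chi$ by $\chi^{3}$, equivalently to replace $\gamma$ by another primitive root of $\mathbb{F}_{q}$.
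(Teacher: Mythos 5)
The paper offers no proof of this lemma at all---it is quoted verbatim from \cite{DHM} (ultimately Gauss--Dickson)---so there is no internal argument to compare against. Your outline is the standard classical derivation: the orbit decomposition under the two symmetries is correct (I verified the five orbits you list), the sixteen row/column sums do collapse to exactly three independent linear relations, and the quartic Jacobi sum $J(\chi,\chi)$ does supply the remaining two, with the sign ambiguity in $y$ explained exactly as you say. The architecture is sound.

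One step fails as written, though: your row/column sum identities are transposed. In the convention under which the quoted symmetries $(h,k)=(e-h,k-h)$ and $(h,k)=(k+2,h+2)$ and the displayed formulas actually hold---namely $(h,k)=\#\{t\in C_h^4 : t+1\in C_k^4\}$---the correct identities are $\sum_k(h,k)=f-\delta_{h,2}$ (the defect occurs when $-1\in C_h$, and $-1\in C_2$ for $f$ odd) and $\sum_h(h,k)=f-\delta_{k,0}$ (when $1\in C_k$); you have placed the corrections on the opposite indices. This is not a harmless relabelling: by your own orbit computation, $\sum_k(2,k)=2(0,0)+2(2,1)$ is even, so your identity $\sum_k(2,k)=f$ is impossible for odd $f$, and solving your system formally yields $(0,0)=\frac{q-3+2x}{16}$ instead of $\frac{q-7+2x}{16}$. (Part of the blame lies with the paper: its definition $(i,j)=|C_i\cap(C_j+1)|$ is the transpose of the convention under which both the stated symmetry relations and the present lemma are true---one can check this directly for $q=5$, where the only nonzero numbers are $(0,1)=(1,3)=(3,2)=1$ in the standard convention---and you appear to have derived the sums from the paper's definition while taking the symmetries at face value; the two are incompatible because the order-four numbers with $f$ odd are not symmetric in $(h,k)$.) With the two identities swapped back, your five-equation system solves to exactly the stated values, so the fix is local; the rest of the outline, including the Jacobi-sum step, is correct.
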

\begin{lemma}\label{le23} {\rm \cite{DHM}} Let $p=4f+1=x^{2}+4y^{2}$ be a prime with $f$ odd, $x\equiv1($mod $4)$ and $y=\pm 1$. Then $D_{i}^{(4,p)}\cup D_{i+1}^{(4,p)}$ is a $(p,\frac{p-1}{2},\frac{p-5}{4},\frac{p-1}{2})$ almost difference set in $\mathbb{Z}_{p}$.
\end{lemma}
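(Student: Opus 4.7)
The plan is to verify the almost difference set property by an explicit group ring computation of $D(X)D(X^{-1})$ in $\mathbb{Z}[\mathbb{Z}_p]$, and to read off the multiplicities of the nonzero differences from the resulting expression.

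Writing $D_j$ for $D_j^{(4,p)}$ and $D = D_i \cup D_{i+1}$, the first step is to expand
\[
D(X)D(X^{-1}) \;=\; \sum_{r,s \in \{i,i+1\}} D_r(X)\,D_s(X^{-1})
\]
and reduce each of the four summands using Lemma \ref{le2.0} (invoking Lemma \ref{le5} to rewrite $D_s(X^{-1})$, since $f=(p-1)/4$ is odd). The two diagonal pieces $r=s$ contribute a combined constant term $2f=(p-1)/2=|D|$, giving the required peak.

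For each $j\in\{0,1,2,3\}$, I would then collect the coefficient of $D_{i+j}(X)$. After the standard index shift, this coefficient is a sum of four cyclotomic numbers of order four, of the form $(-j,-j)_4 + (-j,1-j)_4 + (1-j,-j)_4 + (1-j,1-j)_4$ modulo $4$, with negative subscripts reduced via the symmetry $(h,k)=(e-h,k-h)$ in (\ref{eq7}). Substituting the values from Lemma \ref{le22} and using $p=x^2+4y^2$, $x\equiv 1\pmod 4$, and $y^2=1$, the $\pm 6x$ and $\pm 2x$ contributions cancel inside each sum and the coefficient of $D_{i+j}$ simplifies to $(p-3+(-1)^{j+1}\,2y)/4$. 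Because $y=\pm 1$, the two resulting values are $(p-5)/4$ and $(p-1)/4=(p-5)/4+1$, each attained by exactly two of the four classes. Writing $T$ for the union of the two exceptional classes (so $|T|=(p-1)/2$), one obtains
\[
D(X)D(X^{-1}) \;=\; \tfrac{p-1}{2}\mathbf{1} \,+\, \tfrac{p-5}{4}(G-\mathbf{1}) \,+\, T(X),
\]
which is exactly the defining relation of a $\bigl(p,(p-1)/2,(p-5)/4,(p-1)/2\bigr)$ almost difference set.

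The main obstacle will be bookkeeping rather than any genuine conceptual difficulty: one must assign each of the sixteen cyclotomic numbers across the four sums to its correct Lemma \ref{le22} category (in particular the large ``all others'' bucket), manage the negative and mod-$4$ indices via the symmetry relations in (\ref{eq7}), and verify that the $x$-dependence really does cancel term by term. The hypothesis $y=\pm 1$ is genuinely essential: for $|y|>1$ the two coefficient values would differ by $2|y|>1$, and the configuration would fail to satisfy the almost difference condition.
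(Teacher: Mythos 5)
The paper gives no proof of this lemma at all: it is imported verbatim from \cite{DHM} as a known result. So there is no ``paper's route'' to compare against; what you have done is reconstruct a proof from scratch, and your reconstruction is correct. It also uses exactly the toolkit the paper deploys for its own results (Lemma \ref{le2.0} to expand $D(X)D(X^{-1})$, the order-four cyclotomic numbers of Lemma \ref{le22}, and the symmetries in (\ref{eq7})), so it fits the paper seamlessly. I checked the central computation: for $g\in D_{i+j}$ the multiplicity is $(-j,-j)+(-j,1-j)+(1-j,-j)+(1-j,1-j)$ (a set of pairs symmetric under transposition, so the $f$-odd asymmetry $(h,k)=(k+\tfrac{e}{2},h+\tfrac{e}{2})$ causes no ambiguity), and substituting Lemma \ref{le22} gives $\tfrac{p-3-2y}{4}$ for $j$ even and $\tfrac{p-3+2y}{4}$ for $j$ odd, with the $x$-terms cancelling as you claim; a numerical check at $p=13$ confirms the split $2,3$ across the four classes. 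Two small corrections: the $(0,2)$ number carrying the $-6x$ never actually occurs in any of the four sums (so there is nothing to cancel there, only the $\pm 2x$ terms), and in your closing remark the two coefficient values differ by $|y|$, not $2|y|$ --- which is in fact essential, since for $y=\pm1$ this difference must equal $1$ to give $\mu_{2}=\mu_{1}+1$; your own displayed values $(p-5)/4$ and $(p-1)/4$ already show this correctly.
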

\begin{theorem}\label{th27} Let $p=4f+1=x^{2}+4y^{2}$ be a prime with $f$ odd, $x\equiv1($mod $4)$, and primitive element chosen so that $y=-1$. Let $R$ be a $p\times p$ binary circulant matrix with defining set $D\subseteq\mathbb{Z}_{p}$. Then \begin{enumerate}
\item if $D=D_{i}^{(4,p)}\cup D_{i+1}^{(4,p)}\cup \{0\}$ then $R$ is s-optimal, and
\item if $D=D_{i}^{(4,p)}\cup D_{i+1}^{(4,p)}$ then $R$ is near s-optimal.\end{enumerate}
\end{theorem}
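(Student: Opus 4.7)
The proof follows the template of Theorems \ref{th25} and \ref{th26}. First, the arithmetic: writing $\frac{p^2}{4(p-1)} = f + \frac{1}{2} + \frac{1}{16f}$ gives $B_p = f = (p-1)/4$. In case (1), $|D| = (p+1)/2$; s-optimality via Theorem \ref{th11} then requires $\Lambda = B_p + 1 = (p+3)/4$, and $k - \Lambda = (p-1)/4 = B_p$ follows automatically. In case (2), $|D| = (p-1)/2$ and near s-optimality requires $\Lambda = B_p = (p-1)/4$, again giving $k - \Lambda = B_p$. So in both cases the only thing left to prove is that $D$ is \emph{special} with the stated $\Lambda$.

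For case (2), Lemma \ref{le23} identifies $D_i \cup D_{i+1}$ as a $(p, (p-1)/2, (p-5)/4, (p-1)/2)$ almost difference set, whose two difference levels are $\lambda = (p-5)/4$ and $\lambda + 1 = (p-1)/4 = \Lambda$. Specialness thus reduces to showing that $1$ appears $\lambda + 1$ times (i.e.\ is a \emph{high}-multiplicity difference), not $\lambda$. For case (1), observe that since $f$ is odd Lemma \ref{le5} gives $-(D_i \cup D_{i+1}) = D_{i+2} \cup D_{i+3}$, and $(D_i \cup D_{i+1}) \cup (D_{i+2} \cup D_{i+3})$ partitions $\mathbb{Z}_p^*$; adjoining $\{0\}$ therefore adds exactly $1$ to the multiplicity of every non-zero difference, yielding new difference levels $\lambda + 1$ and $\lambda + 2 = (p+3)/4 = \Lambda$. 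Case (1) thus reduces to the same question as case (2).

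The decisive calculation is then the coefficient of $X^1$---equivalently of $C_0(X)$, since $1 \in D_0^{(4,p)}$---in the group-ring product $(D_i(X) + D_{i+1}(X))(D_i(X^{-1}) + D_{i+1}(X^{-1}))$. Expanding by Lemma \ref{le2.0} yields four products $D_a(X) D_b(X^{-1})$ with $a,b \in \{i, i+1\}$, each contributing a specific cyclotomic number of order $4$ to the $C_0$-coefficient; the sum of these four cyclotomic numbers must equal $\Lambda = (p-1)/4$. I would then substitute the order-4 formulas of Lemma \ref{le22} with $y = -1$---so that the ``distinguished'' numbers become $(0,1)=(1,3)=(3,2)=(p+9+2x)/16$ and $(1,2)=(0,3)=(3,1)=(p-7+2x)/16$---and verify that the four-term sum collapses to $(p-1)/4$ with the $x$-dependence cancelling.

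The main obstacle is this cyclotomic-number bookkeeping: one must show that the four contributions combine so that the $x$-terms cancel and the total equals $(p-1)/4$ rather than $(p-5)/4$. The hypothesis $y = -1$ is essential here---it is precisely this choice of primitive element that places the high-multiplicity difference class in a position containing $1$, and one expects to use the symmetry $(h,k) = (k+e/2, h+e/2)$ for $f$ odd to reduce the sum to a two-term combination in which the $x$-coefficients cancel pairwise. Once specialness is verified, Theorem \ref{th11} yields the stated (near-)s-optimality for both cases.
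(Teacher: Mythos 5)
Your proposal is correct and follows essentially the same route as the paper: reduce both cases to showing $D$ is special, convert $D(X^{-1})$ to $(-D)(X)$ via Lemma \ref{le5} (using $f$ odd), expand with Lemma \ref{le2.0}, and evaluate the resulting four cyclotomic numbers of order $4$ via Lemma \ref{le22} at $y=-1$; your reduction of case (1) to case (2), by noting that adjoining $\{0\}$ raises every nonzero difference multiplicity by exactly $1$, is the same bookkeeping the paper carries out inside the group ring via the extra $+1$ in each coefficient. One caveat, which the paper itself displays but then glosses over: when the four-term sum is computed the $x$-terms do cancel, but the surviving $8y$ term has a sign depending on the parity of $i$ (the paper gets $\frac{4p+4-8y}{16}$ for $i=0,2$ and $\frac{4p+4+8y}{16}$ for $i=1,3$), so with $y=-1$ the count of consecutive pairs equals $\Lambda$ only for $i$ even, and your unconditional claim that the sum ``collapses to $(p-1)/4$'' should be qualified accordingly.
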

\begin{proof} We show only the first case as the second case can be shown in a similar way. Let $D_{i}$ denote $D_{i}^{(4,p)}$. To see that $D$ is special, using Lemma \ref{le2.0} we have \begin{eqnarray*}
D(X)D(X^{-1}) & = & (D_{i}\cup D_{i+1}+\mathbf{1})(X)(D_{i}\cup D_{i+1}+\mathbf{1})(X^{-1}) \\
              & = & (D_{i}\cup D_{i+1}+\mathbf{1})(X)(D_{i+2}\cup D_{i+3}+\mathbf{1})(X) \\
              & = & \frac{p+1}{2}\cdot\mathbf{1}+\sum_{k=0}^{3}\left[(2,k-i)+(3,k-i)+(1,k-i-1)+(2,k-i-1)+1\right] D_{k}(X). \end{eqnarray*}
Using properties of the cyclotomic numbers together with Lemma \ref{le22}, we have that the coefficient of $D_{0}(X)$ is \[ \begin{cases} \frac{4p+4-8y}{16}, \text{ if } i=0 \text{ or } 2, \\
                                  \frac{4p+4+8y}{16}, \text{ if } i=1 \text{ or } 3.\end{cases}\]By Lemma \ref{le23} and Theorem \ref{th24} we know that $\Lambda=\frac{p+3}{4}$. Since $1\in D_{0}$ we have that the number of pairs of consecutive residues in $D$ is $\Lambda$, whence $D$ is special. It is easy to show that $k-\Lambda=B_{p}$ and $\Lambda=B_{p}+1$ hold.
\end{proof}

\section{Contruction of Binary Matrices with Good Peak-Sidelobe Distances}\label{sec5}
In this section we give a construction of binary matrices with good peak-sidelobe distances from s-optimal matrices by following the strategy mentioned in Section \ref{ssec1.1}. Let $R$ be a $(v-2)\times(v-2)$ s-optimal matrix with defining set $D$ of cardinality $k$. Then, by Theorem \ref{th11}, $R$ has peak-sidelobe distance $(v-1)B_{v-2}+1$. Let $R'$ be the $v\times v$ matrix obtained by adjoining a border of $1$s to $R$. i.e. \[
R'=\left[\begin{array}{ccc}
1     &\cdots&1     \\
\vdots&R     &\vdots\\
1     &\cdots&1     \end{array}\right].\] Since the nearest sidelobe of $R$ occurs in $\{A_{R}(\pm1,0),A_{R}(0,\pm1)\}$, and $R$ is circulant, it is easy to see that the nearest sidelobe of $R'$ must occur in $\{A_{R'}(\pm1,0),A_{R'}(0,\pm1)\}$ and that the peak-sidelobe distance of $R'$ is given by \[
Q_{R'}=A_{R'}(0,0)-A_{R'}(\pm1,0)=A_{R'}(0,0)-A_{R'}(0,\pm1)=(v-1)B_{v-2}+2(v-k)+1.\] Now fix $a,b\in\{1...v\}$ and define $S_{aj}^{b}=\{(a,j)\mid 2\leq j\leq v-1,R_{bj}'=1\}$ and $S_{ia}^{b}=\{(j,a)\mid 2\leq i\leq v-1,R_{ib}'=1\}$. Then the exterior entries of the matrix $R'$ having adjacent interior entries equal to $1$ can be represented by the sets $S_{1j}^{2},S_{vj}^{v-1},S_{i1}^{2}$ and $S_{iv}^{v-1}$. Now modify $R'$ by choosing one entry from each of $S_{1j}^{2},S_{vj}^{v-1},S_{i1}^{2}$ and $S_{iv}^{v-1}$, and changing it to $0$. Let $I$ denote the set of indices for the four chosen entries, and $R_{I}'$ denote the resulting matrix.
It is straightforward to show that the nearest sidelobe of $R_{I}'$ occurs in $\{A_{R_{I}'}(\pm1,0),A_{R_{I}'}(0,\pm1)\}$ and that $R_{I}'$ has peak-sidelobe distance given by $Q_{R_{I}'}=(v-1)B_{v-2}+2(v-k)+3$. Similarly, if $R$ is near s-optimal then $R_{I}'$ has peak-sidelobe distance $Q_{R_{I}'}=(v-1)B_{v-2}+2(v-k)+2$.
We thus have the following.
\begin{theorem}\label{th24} Let $R$ be a $(v-2)\times(v-2)$ special matrix with defining set $D$ of cardinality $k$, and let $I$ contain exactly one index from each of $S_{1j}^{2},S_{vj}^{v-1},S_{i1}^{2}$ and $S_{iv}^{v-1}$. Then $R_{I}'$ is a $v\times v$ binary matrix with peak-sidelobe distance \[
Q_{R_{I}'}=\begin{cases} (v-1)B_{v-2}+2(v-k)+3, \text{ if } R \text{ is s-optimal},\\
                         (v-1)B_{v-2}+2(v-k)+2, \text{ if } R \text{ is near s-optimal}.\end{cases}\]
\end{theorem}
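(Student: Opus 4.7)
The plan is to prove the formula by direct calculation, mimicking the derivation already sketched in the paragraph preceding the statement. The three ingredients are (a) evaluating $A_{R'}(0,0)$ and $A_{R'}(0,1)$ for the bordered matrix $R'$ before any flips, (b) tracking how the four chosen flips move these quantities, and (c) verifying that the nearest sidelobe of $R_{I}'$ is attained at a unit shift so that (a) and (b) suffice.

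For (a), the peak is immediate: $A_{R'}(0,0) = k(v-2) + 4(v-1)$, counting interior and border ones separately. For $A_{R'}(0,1)$ I would split the sum by rows; the top and bottom rows contribute $v-1$ horizontal $1$-pairs each, while each middle row $i \in \{2,\ldots,v-1\}$ contributes $R_{i-1,1} + R_{i-1,v-2} + \sum_{j=1}^{v-3} R_{i-1,j}R_{i-1,j+1}$. Summing, and using that each column of $R$ has sum $k$ together with $A_R(0,1) = (v-3)\Lambda$ (obtained by specialising the calculation in the proof of Theorem \ref{th11} to the $(v-2)\times(v-2)$ matrix $R$), yields a closed form for $A_{R'}(0,1)$. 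The inherited reflection symmetry $R'_{i,j} = R'_{v+1-j,v+1-i}$ then gives $A_{R'}(1,0) = A_{R'}(0,1)$ by the argument of Lemma \ref{le12}.

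For (b), a flip in $S_{1j}^{2}$ at an entry $(1,j_0)$ has $R'_{1,j_0-1} = R'_{1,j_0+1} = R'_{2,j_0} = 1$, so setting $R'_{1,j_0}$ to $0$ costs $1$ from $A(0,0)$, $2$ from $A(0,1)$, and $1$ from $A(1,0)$. A flip in $S_{vj}^{v-1}$ has the same accounting by vertical symmetry, while flips in $S_{i1}^{2}$ and $S_{iv}^{v-1}$ are the $90^\circ$ rotations and interchange the $A(0,1)$ and $A(1,0)$ losses. Summing the four contributions, $A_{R_{I}'}(0,0)$ drops by $4$ while $A_{R_{I}'}(0,1)$ and $A_{R_{I}'}(1,0)$ each drop by $6$, so $Q_{R_{I}'}$ at the unit shifts exceeds $Q_{R'}$ by $2$. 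Substituting $k - \Lambda = B_{v-2}$ with $\Lambda = B_{v-2} + 1$ (s-optimal) or $\Lambda = B_{v-2}$ (near s-optimal) into the closed form from (a) then delivers the two claimed values.

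The main obstacle is (c), the ``straightforward'' claim that the nearest sidelobe of $R_{I}'$ really does sit at a unit shift. My approach would be to fix any $(\tau_1,\tau_2)$ with $|\tau_1| + |\tau_2| \ge 2$ and decompose $A_{R_{I}'}(\tau_1,\tau_2)$ into border--border, border--interior, and interior--interior overlaps. The interior--interior piece is bounded by $A_R(\tau_1,\tau_2) \le A_R(0,1) = (v-3)\Lambda$ by the specialness of $R$ (the inequality used in the proof of Theorem \ref{th11}); the border--border piece has at most $(v-|\tau_1|)(v-|\tau_2|)$ terms, strictly fewer than the $v(v-1)$ available at a unit shift; and the two border--interior pieces contribute at most $2k$ in total. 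Summing these bounds and comparing to the closed form for $A_{R_{I}'}(0,1)$ from (a)--(b) gives $A_{R_{I}'}(\tau_1,\tau_2) < A_{R_{I}'}(0,1)$, with the tightest comparison occurring at the diagonal shift $(\tau_1,\tau_2) = (1,1)$.
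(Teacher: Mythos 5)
Your route is the same as the paper's (the paper's ``proof'' is essentially the paragraph preceding the statement: compute the peak and the unit-shift sidelobes of the bordered matrix, track the four flips, and assert that the nearest sidelobe stays at a unit shift). The concrete problem is the last sentence of your step (b): the substitution does \emph{not} deliver the two claimed values. Your own (correct) counts give $A_{R'}(0,0)=k(v-2)+4(v-1)$ and $A_{R'}(0,1)=2(v-1)+2k+(v-3)\Lambda$, hence $Q_{R'}=\left[(v-2)k-(v-3)\Lambda\right]+2(v-k)-2=Q_{R}+2(v-k)-2$; after the four flips, which as you correctly compute raise the unit-shift distance by exactly $2$, one obtains $Q_{R_{I}'}=(v-1)B_{v-2}+2(v-k)+1$ in the s-optimal case and $(v-1)B_{v-2}+2(v-k)$ in the near s-optimal case, each $2$ below the stated values. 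This is not a slip in your accounting: the explicit matrix of Example \ref{ex1} has $A_{R_{I}'}(0,0)=30$ and $A_{R_{I}'}(0,1)=14$, so its peak-sidelobe distance is at most $16$, not $18$; likewise Example \ref{ex2} gives $56-30=26$, not $28$. (The discrepancy traces to the border rows: a full row of $v$ ones contributes $v-1$ adjacent horizontal pairs, as in your decomposition, whereas the stated formula is consistent with counting only $v-2$.) So the step ``substituting \dots delivers the two claimed values'' would fail if actually carried out; an honest write-up of your calculation proves a different formula than the one in the statement.

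The second gap is step (c), which both you and the paper treat as routine. Your proposed bounds do not hold as written: at the shift $(1,1)$ the border--interior overlap is roughly $4k$ rather than at most $2k$ (the top border row meets the first interior row of $R$ and contributes its full row sum $k$, the left border column similarly contributes $k$, and the last interior row and column meet the bottom and right borders for about another $2k$), and ``at most $(v-|\tau_1|)(v-|\tau_2|)$ terms'' bounds \emph{all} overlapping positions, not only the border--border ones, so adding it to the other two pieces double counts. Since $2(v-1)+2k-(4k+O(1))=2(v-3-k)+O(1)$ and $k$ can be as large as $v-3$, the comparison at $(1,1)$ is genuinely tight and cannot be settled by these crude estimates alone; one must also use that the interior--interior term at $(1,1)$ falls strictly below $(v-3)\Lambda$. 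As it stands, neither your sketch nor the paper's assertion establishes that the nearest sidelobe of $R_{I}'$ occurs at a unit shift.
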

\begin{example}\label{ex1} Let $D$ be the set of quadratic nonresidues in $\mathbb{Z}_{5}$ and $R$ the binary circulant matrix with defining set $D$. Then $S_{1j}^{2}=\{(1,4),(1,5)\},S_{vj}^{v-1}=\{(7,3),(7,4)\},S_{i1}^{2}=\{(4,1),(5,1)\}$ and $S_{iv}^{v-1}=\{(3,7),(4,7)\}$. Take $I=\{(1,4),(7,4),(4,1),(4,7)\}$. Then we have \[
 R=\left[\begin{array}{ccccc}
0&0&1&1&0\\
0&0&0&1&1\\
1&0&0&0&1\\
1&1&0&0&0\\
0&1&1&0&0\end{array}\right]\text{ and }
R_{I}'=\left[\begin{array}{ccccccc}
1&1&1&0&1&1&1\\
1&0&0&1&1&0&1\\
1&0&0&0&1&1&1\\
0&1&0&0&0&1&0\\
1&1&1&0&0&0&1\\
1&0&1&1&0&0&1\\
1&1&1&0&1&1&1\end{array}\right].\] By Theorem \ref{th20}, $R$ is near s-optimal, and so by Theorem \ref{th24} we have that the peak-sidelobe distance of $R_{I}'$ is $Q_{R_{I}'}=(v-1)B_{v-2}+2(v-k)+2=18$. According to \cite{SKIR}, an optimal $7\times 7$ binary matrix has peak-sidelobe distance $19$.
\end{example}
\begin{example}\label{ex2} Let $D$ be the set of quadratic residues in $\mathbb{Z}_{7}$ and $R$ the binary circulant matrix with defining set $D\cup\{0\}$. Then $S_{1j}^{2}=\{(1,2),(1,3),(1,4),(1,6)\},S_{vj}^{v-1}=\{(9,2),(9,5),(9,4),(9,6)\},S_{i1}^{2}=\{(2,1),(5,1),(7,1),(8,1)\}$ and $S_{iv}^{v-1}=\{(2,9),(5,9),(7,9),(8,9)\}$. Take $I=\{(1,4),(9,5),(5,1),(6,9)\}$. Then we have \[ R=\left[\begin{array}{ccccccc}
1&1&1&0&1&0&0\\
0&1&1&1&0&1&0\\
0&0&1&1&1&0&1\\
1&0&0&1&1&1&0\\
0&1&0&0&1&1&1\\
1&0&1&0&0&1&1\\
1&1&0&1&0&0&1\end{array}\right]\text{ and }
R_{I}'=\left[\begin{array}{ccccccccc}
1&1&1&0&1&1&1&1&1\\
1&1&1&1&0&1&0&0&1\\
1&0&1&1&1&0&1&0&1\\
1&0&0&1&1&1&0&1&1\\
0&1&0&0&1&1&1&0&1\\
1&0&1&0&0&1&1&1&0\\
1&1&0&1&0&0&1&1&1\\
1&1&1&0&1&0&0&1&1\\
1&1&1&1&0&1&1&1&1\end{array}\right].\] By Theorem \ref{th20}, $R$ is near s-optimal, and so by Theorem \ref{th24} we have that the peak-sidelobe distance of $R_{I}'$ is $Q_{R_{I}'}=(v-1)B_{v-2}+2(v-k)+2=28$. According to \cite{SKIR}, the best peak-sidelobe distance for a $9\times 9$ binary matrix is $29$.
\end{example}
\begin{remark} Note that in the two examples given above, although we have chosen the set $I$ so that the resulting matrix $R_{I}'$ is symmetric, a different choice of the set $I$ will have no effect on the peak-sidelobe distance as long as the condition in the statement of Theorem \ref{th20} is satisfied.
\end{remark}
\begin{remark} Also note that a square binary matrix constructed via Theorem \ref{th24} from an s-optimal interior matrix does not necessarily have a greater peak-sidelobe distance than one constructed from a near s-optimal interior matrix. This can easily be seen by comparing the matrices that result from applying Theorem \ref{th24} to interior matrices constructed using parts (1) and (2) of either of Theorems \ref{th25} and \ref{th27}.
\end{remark}
We have computed the best peak-sidelobe distances of square binary matrices constructed via Theorem \ref{th24} of orders between $7$ and $19$ and included then in Table \ref{ta1}. We have also computed explicitly the paramters and peak-sidelobe distances for the infinite families of square binary matrices constructed in this paper and included them Table \ref{ta2}.
\section{Concluding Remarks}\label{sec6}
We have shown how difference sets and almost difference sets can be used to construct binary matrices with peak-sidelobe distances that are good in the sense that, for small dimension (as illustrated in Examples \ref{ex1} and \ref{ex2}), are very close to being optimal. Before this paper, there have been no results on deterministically constructing such families of binary matrices and, so far, have only been constructed via exhaustive computer searches. We have formulated a tight general upper bound on the peak-sidelobe distance of a certain class of circulant matrices whose defining sets are difference sets and almost difference sets. By using circulant matrices whose aperiodic autocorrelations meet this upper bound, we were able to construct square binary matrices with good aperiodic autocorrelation properties. We leave the reader with the following open problems: 1) Formulate a tight general upper bound on the peak-sidelobe distance a square binary matrix or, improve on the upper bound formulated by Skirlo et al. in \cite{SKIR}, 2) Find new ways of constructing square binary matrices with good aperiodic autocorrelation properties.

\begin{center}
\captionof{table}{{\small Table of peak-sidelobe distances of square binary matrices constructed via Theorem \ref{th24} of orders between $7$ and $19$.}}\label{ta1}
\begin{tabular}{|>{\small}c|>{\small}c|>{\small}c|>{\small}c|}
\multicolumn{4}{l}{{\footnotesize Note: 'DS' and 'ADS' refer to 'difference set' and 'almost difference set' respectively.}} \\
\multicolumn{4}{l}{{\footnotesize Note: Orders given with a '*' refer to those for which previous results exist and for which comparisons are made }}\\
\multicolumn{4}{l}{{\footnotesize \quad \quad \quad in Examples \ref{ex1} and \ref{ex2}.}} \\
  \hline
   Order $M$ & \shortstack{Ref. to construction\\ of $(M-2)\times(M-2)$ interior}& \shortstack{Param. of DS or ADS used \\as defining set $D$ of $R$} &\shortstack{Peak-sidelobe distance $Q_{R_{I}'}$ \\of $M\times M$ matrix $R_{I}'$} \\
  \hline\hline
  $7^{*}$&Theorem \ref{th25} part(2)&$(5,2,0,2)$-ADS&$18$ \\\hline
 $9^{*}$&Theorem \ref{th20}&$(7,4,2)$-DS&$28$ \\\hline
 $13$&Theorem \ref{th20}&$(11,6,3)$-DS&$52$ \\\hline
 $14$&Theorem \ref{th26}&$(12,7,3,2)$-ADS&$56$ \\\hline
 $15$&\shortstack{Theorem \ref{th25} part(2),\\Theorem \ref{th27} part(2)} &$(13,6,2,6)$-ADS&$62$ \\\hline
 $17$&Theorem \ref{th20}&$(15,8,4)$-DS&$84$\\\hline
 $19$&\shortstack{Theorem \ref{th25} part(2),\\Theorem \ref{th27} part(2)} &$(17,8,3,8)$-ADS&$96$ \\\hline

\end{tabular}
\end{center}
\newpage
\begin{center}
\captionof{table}{{\small Table of parameters of families of square binary matrices constructed via Theorem \ref{th24}.}}\label{ta2}
\begin{tabular}{|>{\small}c|>{\small}c|>{\small}c|}
\multicolumn{3}{l}{{\footnotesize Note: 'DS' and 'ADS' refer to 'difference set' and 'almost difference set' respectively.}} \\
  \hline
   \shortstack{Ref. to construction of \\$(M-2)\times(M-2)$ interior} & \shortstack{Param. of DS or ADS used \\as defining set $D$ of $R$} &\shortstack{Peak-sidelobe distance $Q_{R_{I}'}$ of \\$M\times M$ matrix $R_{I}'$} \\
  \hline\hline
Theorem \ref{th20}&\shortstack{$(v,\frac{v+1}{2},\frac{v+1}{4})$-DS of \\Paley-Hadamard type $(A),(B),(C)$ or $(D)$}&$(v+1)\left(\lfloor\frac{v^{2}}{4(v-1)}\rfloor+1\right)+4$ \\\hline
\shortstack{Theorem \ref{th25} part(1),\\Theorem \ref{th27} part(1)} &\shortstack{$(p,\frac{p+1}{2},\frac{p-1}{4},\frac{p-1}{2})$-ADS of \\where $p\equiv 1($mod $4)$ is prime}&$(p+1)\left(\lfloor\frac{p^{2}}{4(p-1)}\rfloor+1\right)+5$ \\\hline
\shortstack{Theorem \ref{th25} part(2),\\Theorem \ref{th27} part(2)} &\shortstack{$(p,\frac{p+1}{2},\frac{p-1}{4},\frac{p-1}{2})$-ADS of \\where $p\equiv 1($mod $4)$ is prime}&$(p+1)\left(\lfloor\frac{p^{2}}{4(p-1)}\rfloor+1\right)+6$ \\\hline
Theorem \ref{th26}&\shortstack{$(4p+1,2p+1,p,p-1)$-ADS\\where $p\equiv 3($mod $4)$ is prime}&$(4p+1)\left(\lfloor\frac{4p^{2}}{(4p-1)}\rfloor+1\right)+4$ \\\hline

\end{tabular}
\end{center}

\bibliographystyle{plain}
\bibliography{myref}

\end{document}